\newcommand{\Q}{\mathbb{Q}}
\newcommand{\ov}{\overline}
\newcommand{\F}{\mathcal{F}}
\newcommand{\fq}{\mathbb{F}_q}
\newcommand{\kk}{\mathbb{K}}
\newcommand{\fqc}{\overline{\mathbb{F}}_q}
\newcommand{\con}{\operatorname{Con}}
\newcommand{\End}{\operatorname{End}}
\newcommand{\fqs}{\mathbb{F}_{q^2}}
\newcommand{\Z}{\mathbb Z}
\newcommand{\aut}{\operatorname{Aut}}
\newcommand{\gal}{\operatorname{Gal}}
\newcommand{\PGL}{\operatorname{PGL}}
\newcommand{\divi}{\text{div}}
\newcommand{\car}{\text{char}}
\theoremstyle{plain}
\newtheorem{thm}{Theorem}[section]
\newtheorem{prop}[thm]{Proposition}
\newtheorem{lem}[thm]{Lemma}
\newtheorem{cor}[thm]{Corollary}
\newtheorem{rem}[thm]{Remark}
\begin{document}
\title{Cyclotomic function fields over finite fields with irreducible quadratic modulus}

\thanks{{\bf Keywords}: Cyclotomic function fields, algebraic function fields, automorphism group, finite fields.}

\thanks{{\bf Mathematics Subject Classification (2010)}: 11R60, 14H37, 14H05.}

\author{Nazar Arakelian and Luciane Quoos}

\address{Centro de Matemática, Computação e Cognição, Universidade Federal do ABC, Avenida dos Estados 5001, CEP 09210-580, Santo André, SP, Brazil}
\email{n.arakelian@ufabc.edu.br}

\address{Instituto de Matemática, Universidade Federal do Rio de Janeiro, Cidade Universitária,
	CEP 21941-909, Rio de Janeiro, RJ, Brazil}
\email{luciane@im.ufrj.br}


\begin{abstract} 
Let $\fq$ be the finite field of order $q$ and $F=\fq(x)$ the rational function field. In this paper, we give a characterization of the cyclotomic function fields $F(\Lambda_M)$ with modulus $M$, where $M \in \fq[T]$ is a monic and irreducible polynomial of degree two. More precisely, we show that $F(\Lambda_M)$ is the only function field, up to $\fq$-isomorphism, with $q+1$ $\fq$-rational places, genus $(q+1)(q-2)/2$ and a subgroup of automorphisms over $\fq$  isomorphic to $\fqs^*$. We also provide the full automorphism group of $F(\Lambda_M)$ in odd characteristic, extending results of \cite{MXY2016} where the automorphism group of $F(\Lambda_M)$ over $\fq$ was computed.
\end{abstract}

\maketitle

\section{Introduction}

In the classification of algebraic function fields (of one variable), which is a leading problem in algebraic geometry, the study of their birational invariants, for instance, their automorphism group, genus and number of rational places, is essential. Although one cannot expect to classify algebraic function fields with only one of such invariants, there are some cases in which the classification is possible in terms of more than one of them. To illustrate this, we have for example the classification of the so called maximal function fields with high genus. Recall that, if $\fqs$ denotes a finite field with $q^2$ elements, a function field over $\fqs$  of genus $g$ is said to be maximal if the number $N$ of its $\fqs$-rational places meets the Hasse-Weil bound, that is
$$
N=q^2+1+2gq.
$$
It is a well know result that a maximal function field over $\fqs$ has genus less or equal than $q(q-1)/2$.
 It is shown in \cite{RS1994} that the Hermitian function field, defined by $\fqs(x,y)$ with $y^q+y=x^{q+1}$ is the only maximal function field over $\fqs$ of genus $q(q-1)/2$ up to $\fqs$-isomorphism.  The classification of  maximal function fields with the second largest genus $(q-1)^2/4$ in odd characteristic can be found in \cite{FGT1997},  $\fqs(x,y)$ with $y^q+y=x^{\frac{q+1}{2}}$. In even characteristic the result can be found in \cite{AT1999} and the function field is given by $\fqs(x,y)$ where $y^{q/2}+\cdots +y^2+y=x^{q+1}. $

In some situations, the characterization is obtained via the genus and the some subgroup of the automorphism group. For instance, in \cite{AK2015} the Artin-Mumford function field is obtained as the only function field  (up to $\mathbb{F}_p$-isomorphism) over $\mathbb{F}_p$ of genus $(p-1)^2$ with a subgroup of automorphisms isomorphic to $(\Z_p \times \Z_p) \rtimes D_{p-1}$. In turn, there are function fields that are characterized via an automorphism subgroup and the structure of some fixed fields, see  \cite{AS2017} for example.

However, in several cases, more than two invariants are used in the classification.
For $q=n^3$ in \cite[Theorem 8]{GK2009} the authors classified the $GK$ algebraic function field over $\mathbb{F}_{q^2}$  as the only, up to $\fqs$-isomorphism, $\fqs$-maximal function field of genus $g=\tfrac 12 (n^3+1)(n^2-2)+1$ with an $\mathbb{F}_{q^2}$ automorphism group of order $n^3(n^3+1)(n^2-1)(n^2-n+1)$. In \cite{TT2019}, it is given a characterization of the so called Ree function field via the number of rational places, the genus, and the shape of two elements of the Weierstrass semigroup at a rational place.  

One characteristic that the aforementioned algebraic function fields have in common is that they are of great importance in the literature, both in theory and for applications. In line with this spirit, we highlight here the cyclotomic function fields over finite fields. Such function fields arose from the parallel between the number fields, i.e., finite extensions of the field $\Q$ of rational numbers, and finite extensions of the rational function field $\fq(x)$ over $\fq$. More precisely, Hayes in \cite{Hayes1974} developed an explicit Class Field Theory for $\fq(x)$, in the sense that he constructed the maximal abelian extension of $\fq(x)$ by means of certain function fields; since any abelian extension of $\Q$ is a subfield of a cyclotomic number field $\Q(\zeta)$ for a suitable $n$-th primitive root of unity $\zeta$ (Kronecker-Weber Theorem), these algebraic function fields introduced by Hayes are know as cyclotomic function fields over finite fields. 
 
In recent years one can find interesting research about cyclotomic function fields over finite fields, see \cite{KW2021, YL2021} and \cite{LL2020} and references in there. In addition to being important by their own nature, cyclotomic function fields have being a valuable tool for applications. For instance, for binary sequences with a low correlation in \cite{JMX2022}, and construction of sequences with high nonlinear complexity, see \cite{LXY2017}. 

A cyclotomic function field is determined by a polynomial $M \in \fq[x]$, where $x$ is an indeterminate over $\fq$, and in this case we denote such field by $F(\Lambda_M)$, where $F=\fq(x)$; the polynomial $M$ is said to be the modulus of $F(\Lambda_M)$. When $M$ is monic of degree one, then the genus of $F(\Lambda_M)$ is $0$, that is, $F(\Lambda_M)$ is a rational function field. Recently in \cite{MXY2016}, the case in which $M$ is a monic and irreducible quadratic polynomial was studied in details. In particular,  from \cite{Hayes1974}, the following properties hold in this case:
\begin{itemize}
\item $\aut_{\fq}(F(\Lambda_M))$ has a subgroup $G$ isomorphic to $\fqs^{*}$.\\
\item $g(F(\Lambda_M))=\frac{(q+1)(q-2)}{2}$.\\
\item $F(\Lambda_M)$ has exactly $q+1$ rational places over $\fq$.
\end{itemize}

One of the results achieved in \cite[Theorem 4.6]{MXY2016} was that when $M$ is monic, quadratic and irreducible, then the automorphism group $\aut_{\fq}(F(\Lambda_M))$ equals $\gal(F(\Lambda_M)/F)$ which is isomorphic to $\fqs^{*}$ by \cite[Theorem 2.3]{Hayes1974}.

The aim of this work is twofold. Firstly, we prove that the properties listed above completely characterize $F(\Lambda_M)$ for $M$ monic, quadratic and irreducible. This will be done in Section \ref{character}. Then, in Section \ref{full} we compute the full automorphism group of $F(\Lambda_M)$ in odd characteristic, that is, the group $\aut_{\fqc}(\fqc F(\Lambda_M))$. This extends the result of \cite[Theorem 4.6]{MXY2016}. 

\section{Preliminaries}

In this section we fix some notation and basic results of the theory of automorphism groups of algebraic function fields and cyclotomic function fields. From now on, $\fq$ will always denote the finite field with $q$ elements, where $q$ is a power of a prime number $p$, and $\fqc$ will denote the algebraic closure of $\fq$. We point out that some of the results presented here are rephrased to suit well our context.

\subsection{Automorphims of function fields}

Our notation and terminology are standard. For an exhaustive treatise of the theory of algebraic curves
and algebraic function fields, the reader is referred to \cite{HKT}, \cite{VSbook} and \cite{St}. Let $\F$ be an algebraic function field (of one variable) over $\fqc$ and  $H$ a subgroup of the automorphism group $\aut_{\fqc}(\F)$ of $\F$. We consider the action of $H$ on the places of $\F$ and for a fixed place $P$ of $\F$ we let $H_P=\{ h \in H \, :\, h(P)=P\}$ denote the stabilizer of $P$, and $H(P)=\{ h(P) \, : \, h \in H\}$ be the orbit of $P$ under the action.

We say that the orbit of an automorphism group is {\it tame} if $p$ does not divide the order of the stabilizer of its places. Otherwise, we say that the orbit is {\it non-tame}. In a similar way we say that a group $G$ is {\it tame} if $p$ does not divide the order of $G$ and {\it non-tame} otherwise.

Let $S \leq \aut_{\fqc}(\F)$ and denote by $\F^{S}$ the subfield of $\F$ fixed by $S$. It is well known that the extension $\F / \F^S$ is Galois with Galois group $S=\gal(\F / \F^S)$. Given a place $P$ of $\F$ and a place $\tilde{P}$ of $\F^{S}$ lying under $P$, the ramification index  $e(P|\tilde{P})$ coincides with the size of the stabilizer of $P$ in S, that is, $e(P|\tilde{P})=|S_P|$. If $g$ and $g^{\prime}$ denote the genus of $\F$ and $\F^S$, respectively, and $\ell_1,\ldots,\ell_k$ denote the sizes of the short orbits of $S$, the Riemann-Hurwitz genus formula is
\begin{equation}\label{rhf}
2g-2 \geq |S|(2g^{\prime}-2)+\sum_{i=1}^{k}(|S|-\ell_i),
\end{equation}
and equality holds if $\gcd(p,|S|)=1$; see \cite[Theorem 11.57 and Remark 11.61]{HKT}.

The following results will be systematically used in the sequel of the paper.

\begin{thm}\cite[Lemma 11.44]{HKT}\label{stab}
Let $\F$ be an algebraic function field over $\fqc$  and $G_P \leq \aut_{\fqc}(\F)$ be the stabilizer of a place $P$ of $\F$. Then the $p$-Sylow subgroup $S_p$ of $G_P$ is a normal subgroup and the quotient group $G_P/S_p$ is cyclic of order prime to $p$. 
\end{thm}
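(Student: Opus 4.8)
The plan is to build the lower ramification filtration of $G_P$ and to read off both assertions from its successive quotients. We may assume $G_P$ is finite, which holds in all cases relevant to us (for instance whenever $g(\F)\ge 2$, in which case $\aut_{\fqc}(\F)$ is itself finite). Fix a local uniformizer $t$ at $P$. Every $\sigma\in G_P$ fixes $P$, hence restricts to an automorphism of $\OO_P$ preserving $\mathfrak{m}_P$; moreover, since the full constant field of $\F$ is $\fqc$ we have $\OO_P/\mathfrak{m}_P=\fqc$, and $\sigma$, being $\fqc$-linear, acts trivially on this residue field. For $i\ge 0$ put
\[
G_i=\{\,\sigma\in G_P \ :\ v_P(\sigma(t)-t)\ge i+1\,\},
\]
so that $G_0=G_P$ (as $\sigma$ preserves $\mathfrak{m}_P$, we have $v_P(\sigma(t)-t)\ge 1$) and $G_0\supseteq G_1\supseteq G_2\supseteq\cdots$. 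First I would record the standard facts that each $G_i$ is independent of the chosen uniformizer and is normal in $G_P$; both follow from $v_P\circ\tau=v_P$ for $\tau\in G_P$, together with the fact that $\tau(t)$ is again a uniformizer and that $\sigma$ acts trivially on the residue field.

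Next I would examine the quotient $G_0/G_1$. The assignment $\chi_0(\sigma)=\overline{\sigma(t)/t}\in\fqc^{*}$, the class modulo $\mathfrak{m}_P$ of the unit $\sigma(t)/t$, is a well-defined homomorphism $G_0\to\fqc^{*}$ with kernel precisely $G_1$; the verification uses that $\sigma$ acts trivially on $\OO_P/\mathfrak{m}_P$. Hence $G_P/G_1$ embeds into $\fqc^{*}$; since every finite subgroup of $\fqc^{*}$ is cyclic and $\fqc^{*}$ contains no element of order $p$ in characteristic $p$, the quotient $G_P/G_1$ is cyclic of order prime to $p$.

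Then I would show $G_1$ is a $p$-group. For each $i\ge 1$ the map $\chi_i(\sigma)=\overline{\sigma(t)-t}$, with values in $\mathfrak{m}_P^{i+1}/\mathfrak{m}_P^{i+2}\cong(\fqc,+)$, is a homomorphism $G_i\to(\fqc,+)$ with kernel $G_{i+1}$, and its target is an elementary abelian $p$-group. Since $G_P$ is finite, $G_m=\{1\}$ for some $m$, so $G_1=\ker\chi_0$ admits a subnormal series whose factors are elementary abelian $p$-groups; hence $G_1$ is a finite $p$-group, and it is normal in $G_P$ by the remark above.

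Finally I would assemble the pieces: $G_1$ is a normal $p$-subgroup of $G_P$ and $|G_P/G_1|$ is prime to $p$, so any $p$-subgroup of $G_P$ maps trivially into $G_P/G_1$ and is therefore contained in $G_1$. Consequently $G_1$ is the unique $p$-Sylow subgroup $S_p$ of $G_P$, it is normal, and $G_P/S_p=G_P/G_1$ is cyclic of order prime to $p$, as claimed. The only step requiring genuine care is checking that $\chi_0$ and the $\chi_i$ are well-defined homomorphisms with the stated kernels and that the $G_i$ do not depend on the chosen uniformizer; these are short computations with $v_P$ and the ultrametric inequality, and everything else is formal group theory.
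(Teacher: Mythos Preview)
Your argument is correct and is the standard proof via the higher ramification filtration; it is essentially the argument one finds in the cited reference \cite[Lemma 11.44]{HKT} (or in Serre's \emph{Local Fields}, Chapter IV). Note, however, that the paper does not supply its own proof of this statement: it merely quotes the result from \cite{HKT} as background, so there is no independent argument in the paper to compare against. Your write-up would serve perfectly well as a self-contained justification; the only point worth tightening is the claim that $G_m=\{1\}$ for large $m$, which relies on the injection $\F\hookrightarrow\fqc((t))$ (so that an automorphism fixing $\fqc$ and $t$ is the identity), but you flag this as a routine verification and it is.
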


\begin{thm}\cite[Theorem 11.79]{HKT}\label{orderab}
Let $\F$ be an algebraic function field  of genus $g \geq 2$ and $G$ an abelian $\fqc$-automorphism subgroup of $\aut_{\fqc}(\F)$.Then 
$$
|G| \leq 
\begin{cases}
4g+ 4, \quad \text{if } p \neq 2,\\
4g+ 2, \quad \text{if } p= 2.
\end{cases} 
$$
\end{thm}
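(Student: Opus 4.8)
The plan is to apply the Riemann--Hurwitz formula \eqref{rhf} to the Galois extension $\F/\F^G$ and to argue according to the genus $\gamma$ of the quotient field $\F^G$. Let $Q_1,\dots,Q_r$ be the places of $\F^G$ ramified in $\F$, with ramification indices $e_1,\dots,e_r$; since the different exponent at a place of ramification index $e$ is at least $e-1$, formula \eqref{rhf} gives
\[
2g-2\ \ge\ |G|\Bigl(2\gamma-2+\sum_{i=1}^{r}\bigl(1-\tfrac{1}{e_i}\bigr)\Bigr),
\]
with equality when $G$ is tame. If $\gamma\ge 2$ this already forces $|G|\le g-1$; if $\gamma=1$, then $g\ge 2$ forces $r\ge 1$, so the right-hand side is at least $|G|/2$ and $|G|\le 4g-4$. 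In both cases the asserted bound holds with room to spare, so the whole problem reduces to the case $\gamma=0$.

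Assume then $\gamma=0$; the displayed inequality would otherwise read $2g-2\ge -2|G|<0$, so $r\ge 1$. The point where abelianness enters is the structure of the cover over $\P^1$: the subgroup generated by the inertia groups $I_1,\dots,I_r$ of places over $Q_1,\dots,Q_r$ has an everywhere-unramified, hence (by Riemann--Hurwitz over $\P^1$) trivial, fixed extension, so $G=I_1\cdots I_r$; in the tame case each $I_i$ is cyclic of order $e_i$ and the standard relation for abelian covers of $\P^1$ forces $|G|$ to divide a product of any $r-1$ of the $e_i$. First I would combine this divisibility with the displayed Riemann--Hurwitz identity and with the integrality $2g-2\in 2\Z$ to settle the tame case: for $r\ge 5$ the bound $|G|\le 4g+4$ is immediate from the inequality alone, for $r=4$ it follows after checking that the only signature with $\sum 1/e_i>3/2$ realisable by an abelian group with $g\ge 2$ is $(2,2,3,3)$, and for $r=3$ it follows after discarding the signatures incompatible with an abelian realisation; the value $4g+4$ is attained by the hyperelliptic-type configuration $\gamma=0$, $r=3$ with signature $(2,2g{+}2,2g{+}2)$ and $G\cong\Z_{2g+2}\times\Z_2$.

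What remains --- and what I expect to be the real work --- is $\gamma=0$ with $r\in\{1,2\}$, together with any occurrence of wild ramification ($p\mid e_i$). Here I would invoke Theorem \ref{stab}: the inertia group $I_i$ has a normal $p$-Sylow subgroup $S_i$ with cyclic prime-to-$p$ quotient, so $I_i=S_i\times C_i$ with $C_i$ cyclic since $G$ is abelian. Lower bounds for the different exponent of a cyclic $p$-extension --- each ramification break of $S_i$ contributing a definite amount, so that the wild part of the different at $Q_i$ grows at least linearly in $|S_i|$ --- make the left side of Riemann--Hurwitz large enough both to rule out $r\le 2$ yielding a large group and, in characteristic $2$, to sharpen $4g+4$ to $4g+2$, the extremal characteristic-$2$ example being the Artin--Schreier curve $y^2+y=x^{2g+1}$ with $G\cong\Z_{2g+1}\times\Z_2$, totally and wildly ramified over $\P^1$. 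Organising these wild estimates so that the characteristic-$2$ constant comes out exactly sharp is the main obstacle; by contrast the tame part is a finite, essentially mechanical optimisation once the abelian divisibility constraint and the parity of $2g-2$ are in hand.
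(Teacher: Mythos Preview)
The paper does not prove this theorem: it is quoted verbatim as \cite[Theorem 11.79]{HKT} in the preliminaries and used as a black box thereafter. There is therefore no ``paper's own proof'' to compare your proposal against.

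For what it is worth, your outline is the standard Nakajima-type approach and is headed in the right direction: reduce via Riemann--Hurwitz to a rational quotient, use that the inertia groups generate $G$ (so $G$ is a quotient of $\Z_{e_1}\times\cdots\times\Z_{e_{r-1}}$ in the tame case), and then do a case-by-case optimisation over the possible signatures, with separate handling of wild ramification through higher ramification filtration estimates. That said, what you have written is a plan rather than a proof: the tame $r=3,4$ classifications are asserted but not carried out, and the wild part is acknowledged as ``the real work'' without any of the actual different-exponent bounds being written down. If you intend this as a self-contained argument you would need to fill in those computations; if, as the paper does, you are content to cite the result, no proof is needed here at all.
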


\begin{thm}\cite[Theorem 11.56]{HKT}\label{orbits}
Let $\F$ be an an algebraic function field  of genus $g \geq 2$ and $G \leq \aut_{\fqc}(\F)$. Suppose that $|G|>84(g-1)$. Then the fixed field $\F^{G}$ is rational and one  of the following holds: 
\begin{itemize}
\item[(a)] $G$ has exactly three short orbits, two tame and one non-tame.
\item[(b)] $G$ has exactly two short orbits, both non-tame.
\item[(c)] $G$ has exactly one short orbit, which is non-tame.
\item[(d)] $G$ has exactly two short orbits, one tame and one non-tame.
\end{itemize}
\end{thm}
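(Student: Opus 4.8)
The plan is to read off the entire statement from the Riemann--Hurwitz formula applied to the Galois cover $\F/\F^G$, together with elementary lower bounds on the different exponents coming from the structure of the stabilisers (Theorem~\ref{stab}). Write $g'$ for the genus of $\F^G$, let $P_1,\dots,P_k$ represent the short orbits of $G$, set $e_i=|G_{P_i}|$ and $\ell_i=|G|/e_i$ (so $e_i\ge 2$, hence $\ell_i\le|G|/2$), and let $d_i$ be the different exponent of $P_i$ over the place of $\F^G$ lying below it. Summing over orbits, Riemann--Hurwitz reads $2g-2=|G|(2g'-2)+\sum_{i=1}^k\ell_i d_i$, while the hypothesis $|G|>84(g-1)$ is precisely $2g-2<|G|/42$.

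First I would show that $\F^G$ is rational. Using only $d_i\ge e_i-1$ one recovers \eqref{rhf}. If $g'\ge 2$ its right-hand side is $\ge 2|G|>|G|/42$, impossible; if $g'=1$ then $\sum_i(|G|-\ell_i)<|G|/42$, and since each summand is $\ge|G|/2$ this forces $k=0$, so $\F/\F^G$ is unramified and $2g-2=0$, contradicting $g\ge 2$. Hence $g'=0$. Feeding $g'=0$ back into \eqref{rhf} gives $\sum_i(|G|-\ell_i)<(2+\tfrac1{42})|G|$, and once more each summand being $\ge|G|/2$ yields $k\le 4$.

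Next I would rule out the possibility that every short orbit is tame. In that situation $d_i=e_i-1$ for every $i$, so $2g-2=|G|\bigl(-2+\sum_{i=1}^k(1-1/e_i)\bigr)$, and $g\ge 2$ forces $\sum_i(1-1/e_i)>2$. The classical Diophantine analysis of $\sum_{i=1}^k(1-1/e_i)$ for integers $e_i\ge 2$ with $k\le 4$ shows that any such value exceeding $2$ is in fact $\ge 2+\tfrac1{42}$, the minimum being attained by $k=3$, $(e_1,e_2,e_3)=(2,3,7)$. Therefore $2g-2\ge|G|/42$, i.e.\ $|G|\le 84(g-1)$, contradicting the hypothesis; so at least one short orbit is non-tame. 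For the last step I need a sharper different bound at a non-tame $P_i$: by Theorem~\ref{stab} the $p$-Sylow subgroup $S_i$ of $G_{P_i}$ is normal and coincides with the first ramification group, so writing $G^{(0)}\supseteq G^{(1)}\supseteq\cdots$ for the ramification groups at $P_i$ we get $d_i=\sum_{j\ge 0}(|G^{(j)}|-1)\ge(e_i-1)+(|S_i|-1)\ge e_i$, since $|S_i|\ge p\ge 2$.

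Consequently a non-tame orbit contributes $\ell_i d_i\ge|G|$ to the sum in Riemann--Hurwitz, whereas a tame orbit contributes exactly $|G|-\ell_i\ge|G|/2$. Writing $t$ and $w\ge 1$ for the numbers of tame and non-tame short orbits, the $g'=0$ form of \eqref{rhf} gives $2g-2\ge-2|G|+\tfrac t2|G|+w|G|$, hence $\tfrac t2+w<2+\tfrac1{42}$; together with $w\ge 1$ this leaves exactly $(t,w)\in\{(0,1),(1,1),(2,1),(0,2)\}$, which are respectively cases (c), (d), (a) and (b). The main obstacle is the Diophantine step in the third paragraph --- checking that a tame sum $\sum(1-1/e_i)$ exceeding $2$ cannot approach $2$ more closely than by $\tfrac1{42}$ --- and, more subtly, pinning down in the wild case the correct inequality $d_i\ge e_i$ rather than the over-optimistic $d_i\ge 2(e_i-1)$, which fails when the wild inertia is small; everything else is routine bookkeeping with Riemann--Hurwitz.
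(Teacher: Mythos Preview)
The paper does not supply a proof of this statement: it is quoted as a preliminary result from \cite[Theorem 11.56]{HKT}. Your argument is correct and is essentially the standard proof one finds in that reference. You apply Riemann--Hurwitz to $\F/\F^G$, use the crude bound $d_i\ge e_i-1$ to force $g'=0$ and $k\le 4$, invoke the classical Hurwitz $\tfrac{1}{42}$ Diophantine analysis on $\sum_i(1-1/e_i)$ to exclude the all-tame configuration, and then sharpen the wild different bound to $d_i\ge e_i$ via Hilbert's formula and the identification of the first ramification group with the $p$-Sylow of the stabiliser (Theorem~\ref{stab}) to pin down the four admissible $(t,w)$ pairs. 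Each step checks out; in particular your caution about using $d_i\ge e_i$ rather than $d_i\ge 2(e_i-1)$ in the wild case is well placed, since the latter is what one would need to rule out $(t,w)=(2,1)$ and it is indeed not available in general.
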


Given a function field $\F/\fq$, it is known that the automorphisms of $\F$ can be extended to automorphisms of $\fqc \F/\fqc$ (the constant field extension of $\F/\fq$). In fact, there exists a group monomorphism $\aut_{\fq}(\F)\hookrightarrow \aut_{\fqc}(\fqc \F)$, and this inclusion respects the action on the places of the respective function fields, see e.g. \cite[Corollary 14.3.9]{VSbook}. The group $\aut_{\fqc}(\fqc \F)$ is the full automorphism group of $\F$. For simplicity, in what follows, we will denote such group by $\aut_{\fqc}(\F)$.

\subsection{Cyclotomic function fields}

In this subsection, we give a brief introduction to the theory of cyclotomic function fields. All the results can be found in \cite{Hayes1974} and, in more details, in \cite[Chapter 12]{VSbook}. 

For a transcendent element $x$ over $\fq$, set $R=\fq[x]$ the polynomial ring on $x$ with coefficients in $\fq$ and $F=\fq(x)$ its field of fractions. Fix an algebraic closure $\ov{F}$ of $F$ and denote by $\End_{\fq}(\ov{F})$ the ring of endomorphisms of $\ov{F}$ (here, $\ov{F}$ is seen as an $\fq$-vector space). Consider $\varphi \in \End_{\fq}(\ov{F})$ given by
$$
\varphi(z)=z^q+xz, \ \ z \in \ov{F}.
$$
Thus we can define a ring homomorphism
\begin{eqnarray}
R & \longrightarrow & \End_{\fq}(\ov{F}) \nonumber \\
f(x) & \mapsto & f(\varphi). \nonumber
\end{eqnarray}
This map induces a structure of $R$-module in $\ov{F}$ via the action $z^{f(x)}=f(\varphi)(z)$ of $R$ in $\ov{F}$. More precisely, given $z \in \ov{F}$ and $f(x)=\sum_{i=0}^{n}a_ix^i \in R$, we have
$$
z^{f(x)}=\sum_{i=0}^{n}a_i\varphi^i(z),
$$
where $\varphi^0(z)=z$  and $\varphi^k=\varphi \circ \varphi^{k-1}$ for $k>1$. Given $M \in R$ with $M \neq 0$, the set of $M$-torsion points
$$
\Lambda_M=\{z \in \ov{F} \ | \ z^M=0\}
$$
is a finite $R$-submodule of $\ov{F}$. As a matter of fact, $|\Lambda_M|=q^{\deg M}$ and $\Lambda_M \cong R/\langle M \rangle $. The cyclotomic function field with modulus $M$, denoted by $F(\Lambda_M)$, is defined by the subfield of $\ov{F}$ generated over $F$ by the elements of $\Lambda_M$. 

Furthermore, $F(\Lambda_M)$ is a Galois extension of $F$ with Galois group 
$$
\gal(F(\Lambda_M)/F) \cong (R / \langle M \rangle)^{\times}, 
$$
where $(R / \langle M \rangle)^{\times}$ is the unit group of $R / \langle M \rangle$. In particular, if $M$ is monic and irreducible of degree $d$, one has $\gal(F(\Lambda_M)/F) \cong \mathbb{F}_{q^d}^{*}$. In this case, it follows from \cite[Corollary 4.2]{Hayes1974} that the genus of $F(\Lambda_M)$ is
$$
g(F(\Lambda_M))=\frac{1}{2} \left(\frac{((d-1)q-d)(q^d-1)}{q-1} -d+2\right).
$$
 
\section{The characterization of $F(\Lambda_M)$}\label{character}

Let $F=\fq(x)$ be the rational function field.
The goal of this section is to characterize the cyclotomic function fields $F(\Lambda_M)$ with modulus $M$, where $M \in \fq[T]$ is a monic and irreducible polynomial of degree two. This result is achieved in Theorem \ref{gencyc}. 

The following result can be obtained by a variation of the discussion at the beginning of \cite[Section 4]{MXY2016}.

\begin{prop}\label{pr}
Let $\fq$ be the finite field of order $q$. Assume that $f(T)=T^2+aT+b \in \fq[T]$ is irreducible. The cyclotomic function field $F(\Lambda_M)$ with modulus $M=f(T)$ is $\fq$-isomorphic to the function field $\fq(v,y)$ defined over $\fq$ by 
$$
y^{q-1}=  -\frac{\gamma v^2+av+(b/\gamma)}{v^q-v}
$$
for all $\gamma \in \fq^{*}$.
\end{prop}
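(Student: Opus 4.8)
The plan is to exhibit explicit generators of $F(\Lambda_M)$ that put it in the stated form. Write $M=f(x)=x^{2}+ax+b\in R=\fq[x]$ and let $\lambda$ be a generator of the cyclic $R$-module $\Lambda_M\cong R/M$; then $F(\Lambda_M)=F(\lambda)$, $\lambda\neq 0$, and $\lambda$ is annihilated by $M(\varphi)=\varphi^{2}+a\varphi+b$. Put $\mu:=\varphi(\lambda)=\lambda^{q}+x\lambda$; applying $\varphi$ again and using $M(\varphi)\lambda=0$, i.e.\ $\varphi^{2}(\lambda)=-a\varphi(\lambda)-b\lambda$, gives
\[
\mu=\lambda^{q}+x\lambda,\qquad \mu^{q}+x\mu=\varphi(\mu)=-a\mu-b\lambda .
\]
The substitution on which the whole argument rests is
\[
s:=\frac{\varphi(\lambda)}{\lambda}=\lambda^{q-1}+x\in F(\Lambda_M),
\]
equivalently $\mu=\lambda s$. (This $s$ is in fact a generator of the fixed field $F(\Lambda_M)^{\fq^{*}}$, which has degree $q+1$ over $F$: the subgroup $\fq^{*}\le\gal(F(\Lambda_M)/F)\cong\fqs^{*}$ acts on $\lambda$ by scalar multiplication and hence fixes $\lambda^{q-1}=s-x$. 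This only serves as motivation below.)

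Substituting $\mu=\lambda s$ and $\mu^{q}=\lambda^{q}s^{q}=\lambda\cdot\lambda^{q-1}\cdot s^{q}=\lambda(s-x)s^{q}$ into $\mu^{q}+(x+a)\mu+b\lambda=0$ and cancelling $\lambda\neq 0$ yields the single polynomial identity
\[
(s-x)s^{q}+(x+a)s+b=0,\qquad\text{i.e.}\qquad s^{q+1}+as+b=x(s^{q}-s).
\]
Solving for $x$ and then computing $\lambda^{q-1}=s-x$ gives
\[
x=\frac{s^{q+1}+as+b}{s^{q}-s}\in\fq(s),\qquad
\lambda^{q-1}=-\,\frac{s^{2}+as+b}{s^{q}-s}.
\]
Since $x\in\fq(s)$ we have $F=\fq(x)\subseteq\fq(s)$, and since $s=\lambda^{q-1}+x\in F(\lambda)$ we get $F(\Lambda_M)=F(\lambda)=\fq(x,\lambda)=\fq(s)(\lambda)$. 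The displayed relation for $s$ is monic of degree $q+1$ in $s$ over $F$, so $[\fq(s):F]\le q+1$, while $[\fq(s)(\lambda):\fq(s)]\le q-1$ because $\lambda^{q-1}\in\fq(s)$; as $[F(\Lambda_M):F]=|(R/M)^{*}|=q^{2}-1=(q+1)(q-1)$, both inequalities are equalities and $F(\Lambda_M)=\fq(s)(\lambda)$ is precisely the Kummer extension $\lambda^{q-1}=-\frac{s^{2}+as+b}{s^{q}-s}$. Taking $v=s$ and $y=\lambda$ therefore identifies $F(\Lambda_M)$ with the function field $\fq(v,y)$ defined by $y^{q-1}=-\frac{v^{2}+av+b}{v^{q}-v}$, which is the case $\gamma=1$ of the statement.

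For an arbitrary $\gamma\in\fq^{*}$, using $\gamma^{q}=\gamma$ one computes
\[
-\frac{(\gamma v)^{2}+a(\gamma v)+b}{(\gamma v)^{q}-\gamma v}=-\frac{\gamma v^{2}+av+b/\gamma}{v^{q}-v},
\]
so the $\fq$-automorphism $v\mapsto\gamma v$ of the rational function field $\fq(v)$, extended by $y\mapsto y$, carries the model just found to $\fq(v,y)$ with $y^{q-1}=-\frac{\gamma v^{2}+av+b/\gamma}{v^{q}-v}$. Hence this function field is $\fq$-isomorphic to $F(\Lambda_M)$ for every $\gamma\in\fq^{*}$, as claimed. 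The only genuinely non-mechanical point is to guess the substitution $s=\varphi(\lambda)/\lambda$; once it is in hand, everything reduces to a short manipulation of the torsion relation $\varphi^{2}(\lambda)+a\varphi(\lambda)+b\lambda=0$ together with the elementary degree count above.
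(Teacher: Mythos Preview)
Your proof is correct and is essentially the same as the paper's: both use the substitution $s=\lambda^{q-1}+x$ (equivalently the paper's $v=\gamma^{-1}(x+y^{q-1})$, with $y$ a generator of $\Lambda_M$) to pass from the defining torsion relation to the Kummer form, and the algebraic manipulation is identical up to whether $\gamma$ is built into the substitution or applied afterward as a scaling $v\mapsto\gamma v$. The only cosmetic difference is that you derive the defining relation of $F(\Lambda_M)$ and the degree count from scratch, whereas the paper simply quotes equation \eqref{eq0} as a known fact and reads off $\fq(v,y)=\fq(x,y)$ from the invertibility of the substitution.
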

\begin{proof}
First, it is not difficult to check that $f(T)$ is irreducible over $\fq$ if, and only if, $\gamma T^2+aT+(b/\gamma)$ is irreducible over $\fq$. By the construction of cyclotomic function fields, one can see that $F(\Lambda_M)=\fq(x,y)$, with 
\begin{equation} \label{eq0}
y^{q^2-1}+(x^q+x+a)y^{q-1}+x^2+ax+b=0.
\end{equation}
For $\gamma \in \fq^*$, set $v=\gamma^{-1}(x+y^{q-1})$. Then
\begin{align*}
\gamma y^{q-1}(v^q-v)&=y^{q-1}(x^q+y^{q^2-q}-x-y^{q-1})\\
&=y^{q^2-1}-y^{2q-2}+(x^q-x)y^{q-1}\\
&=-y^{2q-2}-(2x+a)y^{q-1}-x^2-ax-b\\
&=-(x+y^{q-1})^2-a(x+y^{q-1})-b\\
&=-(\gamma v)^2-a\gamma v-b
\end{align*}
And the result follows.
\end{proof}

In what follows, $\F$  denotes a function field over $\fq$ of genus $g(\F)$ satisfying the following three properties:
\begin{itemize}
\item $\aut_{\fq}(\F)$ has a subgroup $G$ isomorphic to $\fqs^{*}$.\\
\item $g(\F)=\frac{(q+1)(q-2)}{2}$, and\\
\item $\F$ has exactly $q+1$ rational places over $\fq$.
\end{itemize}

\begin{lem}\label{lemma1}
The action of $G$ on the set of places of $\F$ has two short orbits $\Omega_1$ and $\Omega_2$, where $\Omega_1$ is the set of $\fq $-rational places of $\F$ and $\Omega_2$ has only one place of degree two over $\fq$. In particular, if $H\leq G$ is the only subgroup of order $q-1$, then $\F^H$ is a rational function field.
\end{lem}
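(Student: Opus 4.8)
The plan is to analyze the covering $\F\to\F^G$ by means of the Riemann--Hurwitz formula \eqref{rhf}. Note that $G$ is cyclic of order $q^2-1$ and tame, since $\gcd(p,q^2-1)=1$; in particular every subgroup of $G$, and hence every inertia group that occurs, is cyclic and uniquely determined by its order, and \eqref{rhf} holds with equality for $\F/\F^G$ and for all of its intermediate extensions. We may assume $q\ge 3$, so that $g(\F)\ge 2$, and we record $2g(\F)-2=(q+1)(q-2)-2=q^2-q-4$.

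\smallskip\noindent\textit{Step 1: $\F^G$ is rational.} From the equality in \eqref{rhf},
$$q^2-q-4=(q^2-1)\bigl(2g(\F^G)-2\bigr)+\deg\dif(\F/\F^G).$$
If $g(\F^G)\ge 2$ the right-hand side is at least $2(q^2-1)>q^2-q-4$, which is impossible. To exclude $g(\F^G)=1$, observe that each of the $q+1$ $\fq$-rational places of $\F$ lies over a place $\bar P$ of $\F^G$ of degree $1$; since all places of $\F$ over a fixed $\bar P$ have the same degree, such $\bar P$ carries an $\fq$-rational place of $\F$ only when this common degree is $1$, and then its fibre consists of exactly $(q^2-1)/e_{\bar P}$ rational places. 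Summing over the relevant $\bar P$ gives $q+1$, so at least one is ramified with $e_{\bar P}\ge (q^2-1)/(q+1)=q-1$, whence that single place already contributes $(q^2-1)\bigl(1-\tfrac1{e_{\bar P}}\bigr)\ge (q+1)(q-2)=q^2-q-2$ to $\deg\dif(\F/\F^G)$. As $g(\F^G)=1$ would force $\deg\dif(\F/\F^G)=q^2-q-4<q^2-q-2$, this is a contradiction. Hence $g(\F^G)=0$.

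\smallskip\noindent\textit{Step 2: the ramification of $\F/\F^G$.} Now $\deg\dif(\F/\F^G)=q^2-q-4+2(q^2-1)=3q^2-q-6$. Write the branch places of $\F^G$ as $\bar P_i$, with ramification indices $e_i$, degrees $d_i$, and common residual degrees $f_i$ of the places above; then $\deg\dif(\F/\F^G)=(q^2-1)\sum_i(1-\tfrac1{e_i})d_i$, so
$$\sum_i\Bigl(1-\frac1{e_i}\Bigr)d_i=3-\frac{q+3}{q^2-1},$$
while the $\fq$-rational places of $\F$, all of which lie over the $\bar P_i$ with $d_i=f_i=1$, satisfy $\sum_{d_i=f_i=1}1/e_i=1/(q-1)$. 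Isolating the branch places with $d_i=f_i=1$, their number $s$ must satisfy $s\le 2$; one then shows $s=1$ with the corresponding ramification index equal to $q-1$, so that the places of $\F$ above it form a single $G$-orbit $\Omega_1$, consisting of all $q+1$ rational places (with stabilizers of order $q-1$). The residual contribution $3q^2-q-6-(q+1)(q-2)=2(q^2-2)$ to the different is then accounted for by exactly one further branch place, of degree $2$ and ramification index $q^2-1$, whose fibre is a single place of degree $2$ fixed by $G$; this is $\Omega_2$. The delicate point is the exclusion of the sporadic ramification data that are numerically consistent with Riemann--Hurwitz (such configurations occur for small $q$ and for certain odd $q$): here one must use that $\F$ has \emph{exactly} $q+1$ rational places and that all orbit lengths divide $q^2-1$, to see that any additional branch place would either create more than $q+1$ rational places on $\F$ or contradict the total different. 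I expect this exclusion step to be the main obstacle.

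\smallskip\noindent\textit{Step 3: $\F^H$ is rational.} Since $G$ is cyclic, the inertia group at $\Omega_1$ is the unique subgroup of order $q-1$, namely $H$, while the inertia group at $\Omega_2$ is all of $G$. Thus the tame Galois extension $\F^H/\F^G$, of degree $(q^2-1)/(q-1)=q+1$, is unramified over the place beneath $\Omega_1$ and totally ramified over the degree-$2$ place beneath $\Omega_2$, with a single place of degree $2$ above the latter. Riemann--Hurwitz for $\F^H/\F^G$ then gives $2g(\F^H)-2=(q+1)(-2)+(q+1-1)\cdot 2=-2$, so $g(\F^H)=0$ and $\F^H$ is a rational function field.
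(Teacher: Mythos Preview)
Your overall strategy---Riemann--Hurwitz for $\F/\F^G$ followed by a case analysis on the branch data---is the same as the paper's, and your Steps~1 and~3 are correct. But Step~2 has a genuine gap, which you yourself flag as ``the main obstacle,'' and without closing it the lemma is not proved.

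Concretely: from $\sum_i(1-1/e_i)d_i=3-(q+3)/(q^2-1)$ together with $\sum_{d_i=f_i=1}1/e_i=1/(q-1)$ you can indeed deduce $s\le 2$ (the contribution of the $s$ places with $d_i=f_i=1$ is $s-1/(q-1)$, and the total is less than $3$). However you do \emph{not} show $s=1$, and $s=2$ is numerically consistent: for $q$ odd one may take $e_1=e_2=2(q-1)$, giving two orbits of rational places of size $(q+1)/2$ each, and the residual $1-2/(q^2-1)$ is then realized by a single place with $d=1$, $f=2$, $e=(q^2-1)/2$. Nothing in your argument rules this out, nor do you exclude further sporadic configurations with branch places of higher degree. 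These are precisely the ``sporadic ramification data'' you mention, and they must actually be eliminated.

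The paper closes this gap by first passing to the constant field extension $\F'=\fqc\F$, so every place has degree $1$ and one only has to track short-orbit sizes $\ell_i$; the Riemann--Hurwitz relation becomes $\sum_i\ell_i=(k-3)(q^2-1)+q+3$ with $k\ge 3$. The paper then shows $k\le 4$, excludes $k=4$ by explicit divisor inequalities (using that each $\ell_i\le (q^2-1)/2$, with a separate check via Riemann--Hurwitz for the unique involution when $q=5$), and for $k=3$ obtains $\ell_1+\ell_2+\ell_3=q+3$, leaving only the two patterns $(q+1,1,1)$ and $((q+1)/2,(q+1)/2,2)$. The second pattern---which corresponds exactly to your unresolved $s=2$ case---is eliminated by applying Riemann--Hurwitz to the subgroup $N\le G$ of order $2(q-1)$ and checking that no integral genus results. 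Finally the two fixed places over $\fqc$ are shown to come from a single degree-$2$ place over $\fq$. If you want to work over $\fq$ as you do, you will still need an argument of this kind (Riemann--Hurwitz for a well-chosen proper subgroup of $G$) to dispose of the $s=2$ configuration; the counting equations alone do not suffice.
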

\begin{proof}
Let us consider $\F^{\prime}=\fqc \F$ as a function field defined over $\fqc$, that is, $\F^{\prime}/\fqc$ is a constant field extension of $\F/\fq$. Since $\F^{\prime}/(\F^{\prime})^G$ is tame, the Riemann-Hurwitz Formula applied to the extension $\F^{\prime}/(\F^{\prime})^G$ gives
\begin{equation}\label{eq1}
q^2-q-4=(2\bar{g}-2)(q^2-1)+\sum_{i=1}^{k}(q^2-1-\ell_i),
\end{equation}
where $\ell_1,\ldots,\ell_k$ are the sizes of the short orbits of $G$ in the set of places of $\F^{\prime}$ and $\bar{g}$ is the genus of $(\F^{\prime})^G$. 
From \eqref{eq1}, we clearly have $\bar{g} \leq 1$. If $\bar{g}=1$, then
$$
\sum_{i=1}^{k}\ell_i=(k-1)(q^2-1)+q+3.
$$
Since $\ell_i$ is a proper divisor of $q^2-1$, the last equality gives $k=1$, which leads to $\ell_1=q+3$.  However, $G$ is defined over $\fq$ and then the set formed by the $q+1$ $\fq$-rational places of $\F$ must be a union of orbits of $G$, and thus we obtain a contradiction. 

Thus $\bar{g}=0$, and \eqref{eq1} reads
\begin{equation}\label{eq2}
\sum_{i=1}^k \ell_i=(k-3)(q^2-1)+q+3,
\end{equation}
with $k \geq 3$. Recall that $\ell_i \leq (q^2-1)/2$ for all $i$. Again, $G$ is defined over $\fq$, whence the set of $\fq$-rational places of $\F$ is preserved by $G$. In particular, the set of rational places is a union of orbits of $G$. Thus $\ell_i \leq q+1$ for at least one $i$. Using this in \eqref{eq2} gives $k \leq 4$.

Without loss of generality suppose $\ell_1 \leq  q+1$. If $\ell_1 <  q+1$, then there is at least one second orbit of size less than $q+1$, say $\ell_2 <q+1$. Then equation \eqref{eq2} allows to conclude $k\leq 3$, and this yields $k=3$.
Now we suppose $\ell_1=q+1$. We want to prove that $k=3$.  Assume that $k=4$. Then  \eqref{eq2} gives 
\begin{equation}\label{eq3}
\ell_2+\ell_3+\ell_4=q^2+1.
\end{equation}
We may suppose $\ell_2 \leq \ell_3 \leq \ell_4$.
 
If $q$ is even, since $\ell_i\leq (q^2-1)/3$ for $i \in \{2,3,4\}$, then $q^2+1=\ell_2+\ell_3+\ell_4\leq q^2-1$, a contradiction. Now suppose that $q$ is odd. Note that if $\ell_i < (q^2-1)/2$, then $\ell_i  \leq  (q^2-1)/3$. From \eqref{eq3}, we have without loss of generality that  $\ell_4=(q^2-1)/2$. Then $\ell_2+\ell_3=\frac{q^2+3}2$. Furthermore, we have $\ell_2, \ell_3 \leq \frac{q^2-1}3$. Indeed, if w.l.o.g. $\ell_3=(q^2-1)/2$, then \eqref{eq3} provides $\ell_2=2$. Hence, every place in the union of the four short orbits of $G$ have an involution in its stabilizer, and since $G$ is cyclic, such involution is the same for all place. Thus, the Riemann-Hurwitz Formula applied to the fixed field of such involution lead us to a contradiction.

 Let $\ell_i=\frac{q^2-1}{a_i},$ where $ 3 \leq a_i $ is a divisor of $q^2-1$.  We obtain 
$$\frac{q^2-1}{a_2}+\frac{q^2-1}{a_3}=\frac{q^2+3}2 \Rightarrow 2(a_2+a_3)(q^2-1)=a_2a_3(q^2+3) \Rightarrow 2(a_2+a_3)> a_2a_3.$$
Since $a_2, a_3 \geq 3$ we get $a_2, a_3 \in \{3,4, 5\}$, and  w.l.o.g. we obtain $a_2=3, a_3=4$ and $q=5$. Thus, if $q \neq 5$, we must have $k=3$. 
In the case $q=5$ we can have four orbits of size $6, 6, 8$ and $12$, but this case can be easily ruled out by the application of the Riemann-Hurwitz Formula to the fixed field of the unique involution of $G$. We also conclude $k=3$ for $q=5$.

Therefore we have three orbits $\Omega_1$, $\Omega_2$ and $\Omega_3$ of respective sizes $\ell_1$, $\ell_2$ and $\ell_3$, where $\ell_1+\ell_2+\ell_3=q+3$. This rises the next possibilities (up two relabelling the $\ell_i^\prime$s):
\begin{itemize}
\item[(1)] $\ell_1+\ell_2=q+1$ and $\ell_3=2$.
\item[(2)] $\ell_1=q+1$ and $\ell_2=\ell_3=1$.
\end{itemize}

We aim to show that case (1) above does not hold. Since the size of an orbit must divide $q^2-1$, then if $q$ is even (1) can be immediately  ruled out. Assume that $q$ is odd. If $\ell_1=\ell_2$, we have that $\ell_1=\ell_2=(q+1)/2$. Denote by $N$ the subgroup of $G$ of order $2(q-1)$. The Riemann-Hurwitz Formula applied to the extension $\F^{\prime}/(\F^{\prime})^N$ gives
$$
q^2-q-4=(2g((\F^{\prime})^{N})-2)2(q-1)+(q+1)(2(q-1)-1)+2(2(q-1)-s),
$$
where $s=1$ if $q \equiv 3 \mod 4$ and $s=2$ if $q \equiv 1 \mod 4$. Both cases lead to a contradiction.

Hence, suppose w.l.o.g. $\ell_2>\ell_1$. We claim that that $\ell_1$ and $\ell_2$ have a common non-trivial divisor.  Indeed, assume that $\gcd(\ell_1,\ell_2)=1$. Then there are integers $c_1,c_2$ such that $c_1 \ell_1+c_2\ell_2=1$, thus
$$
(q+1)c_1=(\ell_1+\ell_2)c_1=1+\ell_2(c_1-c_2),
$$
which gives $\gcd(\ell_2,q+1)=1$. In the same way, we obtain $\gcd(\ell_1,q+1)=1$. Since both $\ell_1, \ell_2$ divide $q^2-1$, we conclude that both are divisors of $q-1$. If $\ell_2=q-1$, then we must have $\ell_1=2$ as $\ell_1+\ell_2=q+1$, and hence $2$ is a common divisor of $\ell_1$ and $\ell_2$, a contradiction.  If both are proper divisors of $q-1$, then
$$
q+1=\ell_1+\ell_2 \leq \frac{q-1}{2}+\frac{q-1}{2}=q-1,
$$
again a contradiction. This means that there is a prime number $\ell$ dividing  $\gcd(\ell_1,\ell_2)$. Let $E$ be the subgroup of $G$ with $|E|=(q^2-1)/\ell$. Note that for $i=1,2$, we have
$$
\frac{q^2-1}{\ell}=|G_{P_i}|\cdot \left(\frac{\ell_i}{\ell}\right).
$$
Thus $G_{P} \leq E$ for all $P \in \Omega_1 \cup \Omega_2$. This implies that every place of $(\F^{\prime})^E$ lying under the places of  $\Omega_1 \cup \Omega_2$ are unramified in the extension  $(\F^{\prime})^E/(\F^{\prime})^G$. If $\ell$ is odd, the ramification index of  the places if $\Omega_3$ in the extension  $(\F^{\prime})^E/(\F^{\prime})^G$ is $(q^2-1)/2\ell$, and we have by Riemann-Hurwitz Formula applied to the extension $(\F^{\prime})^E/(\F^{\prime})^G$
$$
2g((\F^{\prime})^E)-2=-2\ell+(\ell-1),
$$
which leads to a contradiction. If $\ell=2$, we have that the stabilizer of the places in $\Omega_3$ is precisely $E$, and then the extension $(\F^{\prime})^E/(\F^{\prime})^G$ is unramified, which is a contradiction. Therefore, (2) must hold.

Considering now $\F/\fq$, we have a short orbit $\Omega_1$ of size $q+1$ consisting of all $\fq$-rational places. Let $Q$ be a non-rational place with non-trivial stabilizer in $G$. Then $\deg Q >1$. If $\deg Q>2$, then we would have more than two places fixed by $G_Q$, which contradicts (2). Hence $\deg Q=2$, which means that the two places of $\F^{\prime}/\fqc$ fixed by $G$ are above a place of degree two of  $\F/\fq$. Thus, besides $\Omega_1$, we have a unique short orbit $\Omega_2=\{Q\}$, where $\deg Q=2$.

Finally, let $H$ be the only subgroup of $G$ of order $q-1$. For a place $P$ in the  orbit $\Omega_1$, from the first part of the proof, we conclude that $H$ is the stabilizer of $P$ in $G$. Hence $H$ fixes all the places in the short orbits of $G$. Applying the Riemann-Hurwitz Formula to the extension $\F/\F^{H}$ gives $g(\F^{H})=0$.
\end{proof}

\begin{rem}\label{Gfix}
The degree two place $Q$ over $\fq$ splits over $\fqs$ as  $Q_\beta, Q_\gamma$ and
it follows from the proof of Lemma \ref{lemma1}  that  $G$ fixes both $Q_\beta$ and $Q_\gamma$. Moreover $H$ fixes all the places in the short orbits of $G$. 
\end{rem}

The following result will be important in the sequel of the paper. Yet, we point out here that it can be very useful for future references.

\begin{prop}\label{prop1}
Let $\kk$ be a perfect field containing a primitive $n$-th root of unity $\xi$ and $\mathbb{F}$ be a function field over $\kk$. Let $\F=\mathbb{F}(y)$ be a function field defined over $\kk$ by $y^n=h$, where $\car(\kk) \nmid n$ and $h\in \mathbb{F}$, where $h$ is not a $d$-th power in $\mathbb{F}$ for any $d|n$, i.e., $\F$ is a Kummer extension of $\mathbb{F}$. Let $\sigma \in \aut(\F/\kk)$ given by $\sigma(y)=\xi y$ and $\sigma|_{\mathbb{F}}$ is the identity map. If a non-trivial $\tau \in \aut(\F/\kk)$ normalizes $\langle \sigma \rangle$, then there exists $k \in \{1,\ldots,n-1\}$ with $k \nmid n$ and $f \in \mathbb{F}$ such that $\tau(y)=fy^k$, with $f^n=\tau(h)/h^k$. Moreover, if $\tau$ commutes with $\sigma$, then $k=1$.
\end{prop}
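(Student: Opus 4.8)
The plan is to exploit that $\mathbb{F}$ coincides with the fixed field $\F^{\langle\sigma\rangle}$, so that an automorphism normalizing $\langle\sigma\rangle$ is forced to preserve $\mathbb{F}$; after that the statement drops out of a short eigenvector computation.

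First I would record the structural consequences of the Kummer hypothesis. Since $\F/\mathbb{F}$ is a Kummer extension, $[\F:\mathbb{F}]=n$ and $\{1,y,\dots,y^{n-1}\}$ is an $\mathbb{F}$-basis of $\F$. Because $\sigma^m(y)=\xi^m y$ while $\sigma^m$ fixes $\mathbb{F}$ pointwise, $\sigma$ has order exactly $n$, so $\F/\F^{\langle\sigma\rangle}$ is Galois of degree $n$; combined with $\mathbb{F}\subseteq\F^{\langle\sigma\rangle}$ and $[\F:\mathbb{F}]=n$ this forces $\F^{\langle\sigma\rangle}=\mathbb{F}$. Now if $\tau$ normalizes $\langle\sigma\rangle$, then $\tau(\mathbb{F})=\tau(\F^{\langle\sigma\rangle})=\F^{\tau\langle\sigma\rangle\tau^{-1}}=\F^{\langle\sigma\rangle}=\mathbb{F}$, so $\tau$ restricts to a $\kk$-automorphism of $\mathbb{F}$; in particular $\tau(h)\in\mathbb{F}$.

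Next I would use that conjugation by $\tau$ is an automorphism of the cyclic group $\langle\sigma\rangle$ of order $n$, hence multiplication by a unit modulo $n$: writing $\tau^{-1}\sigma\tau=\sigma^k$ with $k\in\{1,\dots,n-1\}$ and $\gcd(k,n)=1$, we get $\sigma\tau=\tau\sigma^k$. Applying this to $y$ and using $\tau(\xi)=\xi$ yields $\sigma(\tau(y))=\tau(\xi^k y)=\xi^k\,\tau(y)$. Expanding $\tau(y)=\sum_{i=0}^{n-1}c_iy^i$ with $c_i\in\mathbb{F}$ and letting $\sigma$ act term by term, this becomes $\sum_{i=0}^{n-1}c_i(\xi^i-\xi^k)y^i=0$; since $\xi$ is a primitive $n$-th root of unity and the $y^i$ are $\mathbb{F}$-linearly independent, $c_i=0$ for all $i\ne k$. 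Hence $\tau(y)=fy^k$ with $f:=c_k\in\mathbb{F}$, and $f\ne 0$ because $\tau$ is injective and $y\ne 0$; moreover $k\ne 0$, since $k=0$ would give $\tau(y)\in\mathbb{F}$ and then $\tau(\F)=\mathbb{F}(\tau(y))\subseteq\mathbb{F}\subsetneq\F$ would contradict surjectivity of $\tau$.

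To finish, I would raise $\tau(y)=fy^k$ to the $n$-th power: on one hand $\tau(y)^n=\tau(y^n)=\tau(h)$, on the other $\tau(y)^n=f^n(y^n)^k=f^nh^k$, whence $f^n=\tau(h)/h^k$. If in addition $\tau$ commutes with $\sigma$, then $\tau^{-1}\sigma\tau=\sigma$ forces $k=1$. None of the steps is a genuine obstacle; the only point that needs care is the first — establishing $\tau(\mathbb{F})=\mathbb{F}$ — since otherwise $\tau(h)$ need not lie in $\mathbb{F}$ and the $\mathbb{F}$-linear-algebra argument of the third step would be meaningless.
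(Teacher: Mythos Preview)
Your proof is correct and follows essentially the same route as the paper's: write $\tau(y)$ in the $\mathbb{F}$-basis $\{1,y,\dots,y^{n-1}\}$, use the normalization relation to extract the single surviving monomial, then raise to the $n$-th power. Your version is in fact a bit more careful than the paper's, since you explicitly justify $\tau(\mathbb{F})=\mathbb{F}$ via $\mathbb{F}=\F^{\langle\sigma\rangle}$ before invoking $\tau(h)\in\mathbb{F}$, whereas the paper leaves this implicit.
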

\begin{proof}
For $i=0,\ldots,n-1$, let $f_i \in \mathbb{F}$ such that $\tau(y)=\sum\limits_{i=0}^{n-1}f_iy^i$. If $\sigma^\ell \tau =\tau \sigma$, then 
$$
\sum\limits_{i=0}^{n-1}(\xi^{i\ell}-\xi)f_iy^i=0.
$$
From the linear independence of the set $\{1,y,\ldots,y^{n-1}\}$ over $\mathbb{F}$, we conclude that $f_k\neq 0$ for the unique $k \in \{1,\ldots,n-1\}$ such that $k\ell \equiv 1 \mod n$, and $f_i=0$ for $i \neq k$. Thus $\tau(y)=f_ky^k$. Now, on the one hand, $\tau(y^n)=\tau(h)$. On the other hand,
$$
\tau(y^n)=(\tau(y))^n=f_k^n(y^n)^k=f_k^nh^k.
$$
 Hence $f_k^nh^k=\tau(h)$.
\end{proof}

\begin{lem}\label{do k}
Let $\F$ be an algebraic function field over a perfect field $\kk$ having a primitive $n$-th root of unity, for some $n>0$. Let $\F(y)$ be a Kummer extension of $\F$ defined by $y^n=u^k$, where $u \in \F$ and $\gcd(n,k)=1$. Then there exists $z \in \F(y)$ such that $\F(y)=\F(z)$, and $z^n=u$.
\end{lem}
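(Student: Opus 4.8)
The plan is to construct $z$ explicitly by a Bézout argument. Since $\gcd(n,k)=1$, choose integers $a,b\in\Z$ with $ak+bn=1$, and set
$$
z:=y^{a}u^{b}\in\F(y),
$$
which makes sense because $u\in\F^{\times}$ (as a Kummer generator, $u\neq 0$) and $y\neq 0$, so negative exponents are harmless inside the field $\F(y)$. Using the defining relation $y^{n}=u^{k}$, a one-line computation gives
$$
z^{n}=y^{an}u^{bn}=(y^{n})^{a}u^{bn}=(u^{k})^{a}u^{bn}=u^{ak+bn}=u,
$$
which is the second assertion of the lemma.

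It remains to prove $\F(y)=\F(z)$. The inclusion $\F(z)\subseteq\F(y)$ is immediate from $z\in\F(y)$. For the reverse inclusion I would compare $n$-th powers: from $z^{n}=u$ we get $z^{nk}=u^{k}=y^{n}$, hence $(y z^{-k})^{n}=1$, so $\zeta:=y z^{-k}$ is an $n$-th root of unity. Since $\kk$ contains a primitive $n$-th root of unity it contains all of them, and $\kk\subseteq\F\subseteq\F(z)$; therefore $y=\zeta z^{k}\in\F(z)$, giving $\F(y)\subseteq\F(z)$.

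I do not expect a genuine obstacle here: the proof is essentially a Bézout manipulation, and the only points worth a remark are that $u$ is a unit (so $z^{-1}\in\F(y)$) and that the auxiliary element $y z^{-k}$ lands in the group of $n$-th roots of unity, which lies in $\kk$ by hypothesis. As a sanity check one may note that $\gcd(a,n)=1$ follows from $ak+bn=1$, so $z\notin\F$ (otherwise $y=\zeta z^{k}\in\F$, contradicting $[\F(y):\F]=n$), and consequently $X^{n}-u$ is the minimal polynomial of $z$ over $\F$; in particular $u$ is itself not a $d$-th power in $\F$ for any $d\mid n$ with $d>1$, consistent with $\F(z)$ being a Kummer extension. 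None of this is strictly needed for the stated conclusion, but it confirms the construction is consistent.
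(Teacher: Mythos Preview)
Your proof is correct and follows exactly the same approach as the paper: write $1=ak+bn$ via B\'ezout and set $z=y^{a}u^{b}$, which is precisely the paper's construction (with different letters). The paper leaves the verification of $\F(y)=\F(z)$ implicit, whereas you spell it out via the observation that $yz^{-k}$ is an $n$-th root of unity lying in $\kk$; this is a welcome addition but not a departure in method.
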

\begin{proof}
Let $r,s \in \Z$ such that $rn+sk=1$. The result is obtained by defining $z:=y^su^r$.
\end{proof}

The main result of the section is the following.

\begin{thm}\label{gencyc}
Let $\F$ be a function field of genus $g$ over $\fq$. Assume that \\
\begin{itemize}
\item $\aut_{\fq}(\F)$ has a subgroup $G$ isomorphic to $\mathbb F_{q^2}^{*}$.\\
\item $g=\frac{(q+1)(q-2)}{2}$.\\
\item $\F$ has exactly $q+1$ rational places.\\
\end{itemize}
Then $\F$ is $\fq$-birationally equivalent to the cyclotomic function field $F(\Lambda_M)$, where $M \in \fq[T]$ is a quadratic monic irreducible polynomial. 
\end{thm}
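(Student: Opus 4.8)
The plan is to realize $\F$ as an explicit Kummer cover of a rational function field, using the fixed field of the subgroup $H\leq G$ of order $q-1$, and then to identify it with a normal form of Proposition \ref{pr}. By Lemma \ref{lemma1} and Remark \ref{Gfix}: $\F^{H}$ is rational, say $\F^{H}=\fq(v)$; the $q+1$ rational places of $\F$ lie, each totally (and tamely) ramified, above the $q+1$ rational places of $\fq(v)\cong\p^{1}$; the degree-two place $Q$ lies, totally ramified, above a degree-two place $\tilde Q$ of $\fq(v)$, corresponding to a monic irreducible quadratic $f(v)=v^{2}+av+b\in\fq[v]$ with conjugate geometric roots $\beta,\gamma$; and $G/H$, cyclic of order $q+1$, acts on $\fq(v)$ as a non-split torus in $\PGL_{2}(\fq)$ fixing $\beta,\gamma$ and permuting the $q+1$ rational places as a single $(q+1)$-cycle. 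First I would record that these are the only ramified places of $\F/\fq(v)$: as $H$ is normal in $G$, the ramification locus is a union of $G$-orbits, and a stabilizer computation identifies it with $\Omega_{1}\cup\{Q\}$. Since $\fq^{\times}$ is cyclic of order $q-1$, $\F/\fq(v)$ is a Kummer extension: $\F=\fq(v,y)$ with $y^{q-1}=h$ for some $h\in\fq(v)^{\times}$, and $H=\langle\sigma\rangle$ with $\sigma(y)=\z y$ for a fixed primitive $(q-1)$-th root of unity $\z\in\fq$.

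Replacing $y$ by a suitable $\fq(v)$-multiple (that is, twisting $h$ by a $(q-1)$-th power), I may assume $\operatorname{div}(h)$ is supported on the $q+3$ geometric places above the branch locus; write $m_{0},\dots,m_{q}$ for the multiplicities of $h$ at the rational places and $m$ for the common multiplicity at $\beta$ and $\gamma$ (equal since $h\in\fq(v)$). Total ramification makes each of $m_{0},\dots,m_{q},m$ prime to $q-1$. Now a generator $\tau$ of $G$ commutes with $\sigma$, so Proposition \ref{prop1} gives $\tau(y)=f_{0}(v)\,y$ with $f_{0}^{\,q-1}=\tau(h)/h$, where $\tau$ acts on $\fq(v)$ through the non-split torus $\bar\tau$. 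Comparing $\operatorname{div}(\tau(h))$ with $\operatorname{div}(h)$ — using that $\bar\tau$ cyclically permutes the rational places and fixes $\beta,\gamma$ — forces all differences $m_{i}-m_{j}$ to be divisible by $q-1$; after one further $(q-1)$-th power twist I may assume all finite rational places have multiplicity $\mu$ in $\operatorname{div}(h)$. Then $h\,(v^{q}-v)^{-\mu}f(v)^{-m}$ has trivial divisor, hence
$$
h=c\,(v^{q}-v)^{\mu}\,f(v)^{m},\qquad c\in\fq^{\times},
$$
and, combining the congruence of the $m_{i}$ with the fact that $\deg\operatorname{div}(h)=0$, one gets $(q-1)\mid 2(\mu+m)$.

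It remains to normalize $(\mu,m)$ to $(-1,1)$, which is the technical heart of the argument. From $f_{0}^{\,q-1}=\tau(h)/h$ one reads off $f_{0}$ explicitly, and iterating $\tau(y)=f_{0}(v)\,y$ computes $\tau^{q+1}$. Since $\tau^{q+1}$ must generate $H$, it must act on $y$ as a primitive $(q-1)$-th root of unity; in odd characteristic this excludes the case $\mu+m\equiv(q-1)/2\pmod{q-1}$, in which $\tau^{q+1}$ would necessarily act as a square of $\fq^{\times}$. Hence $\mu+m\equiv 0\pmod{q-1}$, i.e. $\mu\equiv-m$. Applying Lemma \ref{do k} with $k\equiv m^{-1}\pmod{q-1}$ replaces $(\mu,m)$ by $(-1,1)$ and $c$ by $c^{m^{-1}}$, so $\F=\fq(v,z)$ with
$$
z^{q-1}=\frac{c^{m^{-1}}f(v)}{v^{q}-v}.
$$
Writing $c^{m^{-1}}=-\gamma$ and rescaling $f$, this is precisely a form $z^{q-1}=-(\gamma v^{2}+a'v+b'/\gamma)/(v^{q}-v)$ with $M=T^{2}+a'T+b'$ monic and irreducible, so by Proposition \ref{pr} $\F$ is $\fq$-birationally equivalent to $F(\Lambda_{M})$. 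The main obstacle is this last normalization step, in particular the bookkeeping of exponents modulo $q-1$ and of constants (modulo squares, in odd characteristic) needed both to rule out the spurious residue class and to land exactly on one of the explicit shapes of Proposition \ref{pr}.
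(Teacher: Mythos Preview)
Your proposal is correct and follows essentially the same route as the paper: realize $\F$ as a Kummer $(q-1)$-cover of $\fq(v)=\F^{H}$, use Proposition~\ref{prop1} with a generator of $G$ to force all the exponents at the rational places to agree modulo $q-1$, obtain $h=c(v^{q}-v)^{\mu}f(v)^{m}$ with $(q-1)\mid 2(\mu+m)$, and then exclude the spurious residue class by the observation that $\tau^{q+1}$ must act on $y$ as a \emph{primitive} $(q-1)$-th root of unity. The paper carries out this last exclusion in more detail than your sketch---it explicitly computes $\tau^{q+1}(y)$ as $\xi^{2j}C^{\,n-1}y$, shows by a telescoping product that $C\in\fq$, and then uses that the exponent $n-1$ is even (this case only arises when $q\equiv 1\bmod 4$) to conclude $\xi$ would be a square---but the argument is exactly the one you outline.
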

\begin{proof}
Denote by $H \leq G$ the subgroup of order $q-1$. Since $\F^H$ is rational by Lemma \ref{lemma1}, there exists $v \in \F^H$ such that $\F^H=\fq(v)$, and we have that $\F$ is a Kummer extension of $\fq(v)$ of degree $q-1$. From \cite[Proposition 6.3.1]{St} and Lemma \ref{lemma1}, there exist $r,s_0,\ldots,s_{q-1} \in \mathbb{Z}\backslash\{0\}$ co-primes with $q-1$, where $|r|< q-1$, $|s_i| <q-1$ for all $i$ and $\gcd(q-1,2r+s_0+\cdots+s_{q-1})=1$ , such that $\F=\fq(v,y)$ and
\begin{equation}\label{eqcur1}
y^{q-1}=h(v):=\lambda (v^2+av+b)^r \prod_{\alpha_i \in \fq}(v-\alpha_i)^{s_i},
\end{equation}
where $\lambda, b \in \fq^{*}$, $a \in \fq$ and $T^2+aT+b \in \fq[T]$ is irreducible. Up to an $\fq$-birational map, we may assume that $0<r<q-1$ and
\begin{equation}\label{eqcur2}
h(v)=\frac{\lambda (v^2+av+b)^r}{ \prod\limits_{\alpha_i \in \fq}(v-\alpha_i)^{s_i}},
\end{equation} 
with $0<s_i<q-1$ for all $i$. 
Let $\rho$ be a generator of $G$. Since $G$ is abelian, the restriction of $\rho$ to $\fq(v)$ is an automorphism of $\fq(v)$, which we will denote also by $\rho$. Denote by $Q^{\prime}$ and $P^{\prime}_i$ the places of $\fq(v)$ associated to $v^2+av+b$ and $v-\alpha_i$ for $\alpha_i \in \fq$, respectively, and denote by $P^{\prime}_\infty$ the place at the infinity of $\fq(v)$. Note that these places are exactly the (fully) ramified places of the extension $\F/\fq(v)$. Again, since $G$ is abelian, $\rho$ acts on the set of short orbits of $G$ as it does on the places of $\fq(v)$. Thus, $\rho$ fixes $Q^{\prime}$ and acts transitively on $\{P^{\prime}_0,\ldots,P^{\prime}_{q-1},P^{\prime}_\infty\}$. Let $e$ and $j$ such that  $\rho(P^{\prime}_j)=P^{\prime}_e$, and let $k,l$ such that $\rho(P^{\prime}_l)=P^{\prime}_\infty$ and $\rho(P^{\prime}_\infty)=P^{\prime}_k$. Since $\sigma(\divi(w))=\divi(\sigma(w))$ for all $w \in \F$ and $\sigma \in \aut(\F)$, we obtain
$$
\divi(\rho(h(v)))=rQ^{\prime}+(\sum\limits_{i=0}^{q-1}s_i-2r)P^{\prime}_k-s_lP^{\prime}_\infty-\sum\limits_{i \neq k} \tilde{s}_iP^{\prime}_i,
$$
where $\tilde{s}_i=s_d$ if and only if $ P^{\prime}_i=\rho(P^{\prime}_d)$. Thus
$$
\rho(h(v))=\frac{\alpha (v^2+av+b)^r(v-\alpha_k)^{s_0+\cdots+s_{q-1}-2r}}{\prod\limits_{i \neq k}(v-\alpha_i)^{\tilde{s}_i}}
$$
for some $\alpha \in \fq^*$. 
By the other side, applying Proposition \ref{prop1} we have $\rho(y)=f(v)y$ for some $f(v) \in \fq(v)$ and $h(v)f(v)^{q-1}=h(\rho(v))$. And we obtain
$$
f(v)^{q-1}\cdot \frac{\lambda (v^2+av+b)^r}{ \prod\limits_{\alpha_i \in \fq}(v-\alpha_i)^{s_i}}=\frac{\alpha (v^2+av+b)^r(v-\alpha_k)^{s_0+\cdots+s_{q-1}-2r}}{\prod\limits_{i \neq k}(v-\alpha_i)^{\tilde{s}_i}}.
$$
Thus $\alpha=\lambda$ and
\begin{equation}\label{f(v)}
f(v)^{q-1}=\frac{(v-\alpha_k)^{s_0+\cdots+s_{q-1}-2r+s_k}}{\prod\limits_{i \neq k}(v-\alpha_i)^{\tilde{s}_i-s_i}}.
\end{equation}

Hence, $q-1$ divides both $-2r+s_0+\cdots+s_{q-1}+s_k$ and $\tilde{s}_i-s_i$. In particular, since $\tilde{s}_e=s_j$ and $|s_j-s_e|<q-1$, we can conclude that $s_e=s_j$. Since  $G$ acts transitively on $\{P^{\prime}_0,\ldots,P^{\prime}_{q-1},P^{\prime}_\infty\}$, by taking a suitable power of $\rho$, we obtain that $s_e=s_j$ for arbitrary $e$ and $j$. Thus, we have that
$$
y^{q-1}=\frac{\lambda(v^2+av+b)^r}{(v^{q}-v)^n},
$$
with $0<n<q-1$ and $n$ co-prime with $q-1$.  Moreover, $q-1$ divides $ 2(n-r)$ which is possibly only if $n=r$ or $q-1=2|n-r|$. In the former case, $\F=\fq(v,y)$ with
$$
y^{q-1}= \lambda \left(\frac{v^2+av+b}{v^q-v}\right)^r,
$$
where $\gcd(q-1,r)=1$. By Lemma \ref{do k}, $\F=\fq(v,z)$, with
$$
z^{q-1}= \lambda^{1/r} \cdot \frac{v^2+av+b}{v^q-v}.
$$
From Proposition \ref{pr}, we conclude that $\F$ is $\fq$-isomorphic to $F(\Lambda_M)$, where $M=T^2-\lambda^{1/r}aT+\lambda^{2/r}b$. 

We now proceed to show that the latter case does not hold. For this, assume that $q-1=2|r-n|$. We clearly have that $q$ is odd, and since both $r$ and $n$ are co-prime with $q-1$, we have $q \equiv 1 \mod 4$. Without loss of generality, we may assume that $r>n$. Using \eqref{f(v)} and some results above, we conclude that
$$
f(v)=\xi^j \cdot (v-\alpha_k)^{n-1}, 
$$
where $\xi \in \fq$ is a primitive $(q-1)$-th root of unity and $P^{\prime}_{\alpha_k}$ is the image of $P^{\prime}_{\infty}$ by the restriction of $\rho$ to $\fq(v)$.
Thus,
\begin{equation}\label{rhoy}
\rho(y)=f(v)y=\xi^j \cdot (v-\alpha_k)^{n-1} y.
\end{equation}
However, since $G=\langle \rho \rangle$, we have that $H=\langle \rho^{q+1} \rangle$. In particular, $\rho^{q+1}(y)=\xi y$. On the other hand, using \eqref{rhoy}, we obtain that
$$
\rho^{q+1}(y)=\xi^{2j}\cdot\left(\rho^{q}(v-\alpha_k)\cdot\rho^{q-1}(v-\alpha_k)\cdots \rho(v-\alpha_k) \cdot (v-\alpha_k)\right)^{n-1}y,
$$
which gives that 
\begin{equation}\label{xi}
\xi^{1-2j}=  \left(\rho^{q}(v-\alpha_k)\cdot\rho^{q-1}(v-\alpha_k)\cdots \rho(v-\alpha_k) \cdot (v-\alpha_k)\right)^{n-1}.
\end{equation}
It should be noted that $C:=\rho^{q}(v-\alpha_k)\cdot\rho^{q-1}(v-\alpha_k)\cdots \rho(v-\alpha_k) \cdot (v-\alpha_k)$ is in $\fq$. Indeed, we know that
$$
f(v)=\xi^j \cdot (v-\alpha_k)^{n-1}
$$
is such that 
$$
f(v)^{q-1}=\frac{\rho(h(v))}{h(v)}.
$$
Recall that the restriction of $\rho $ to $\fq(v)$ has order $q+1$. Thus, for $s= (n-1)(q-1)$ we get 
\begin{align*}
C^{s}&=\rho^{q}\left((v-\alpha_k)^{s}\right) \cdot \rho^{q-1}\left((v-\alpha_k)^{s}\right) \cdots \rho\left((v-\alpha_k)^{s}\right) \cdot  (v-\alpha_k)^{s} \\
&= \rho^{q}\left(\frac{\rho(h(v))}{h(v)} \right) \cdot \rho^{q-1}\left(\frac{\rho(h(v))}{h(v)} \right)\cdots \rho\left(\frac{\rho(h(v))}{h(v)} \right)\cdot \frac{\rho(h(v))}{h(v)} \\
&= \left(\frac{h(v)}{\rho^{q}(h(v))} \right) \cdot \left(\frac{\rho^{q}(h(v))}{\rho^{q-1}(h(v))} \right)\cdots \left(\frac{\rho^2(h(v))}{\rho(h(v))} \right)\cdot \frac{\rho(h(v))}{h(v)}\\
& =1.
\end{align*}
This implies that $C^{n-1}$ is a constant in $\fq$, and thus $C$ is in the full constant field of $\F$. Since the full constant field of $\F$ is $\fq$, we conclude that $C \in \fq$. Since $n-1$ is even, we obtain from \eqref{xi} that $\xi$ is a square in $\fq$, which is a contradiction. This finishes the proof.
\end{proof}

\section{The full automorphism group}\label{full}

The aim of this section is to determine the structure of $\aut_{\fqc}(\F)$, where $\F=F(\Lambda_M)$ is the cyclotomic function field  with modulus $M=T^2+aT+b$  irreducible in $\fq[T]$.  Since for $q=2$ we have that $\F$ is a rational function field, which has well known automorphism group, in what follows we always assume $q>2$.

Recall that $\F=\fq(v,y)$, where
$$
y^{q-1}=-\frac{v^2+av+b}{v^q-v}.
$$
The group  $\aut_{\fqc}(\F)$ admits a subgroup $G \cong \fqs^*$ and,  by \cite[Theorem 4.6]{MXY2016}, we also have  $G=\aut_{\fq}(\F)$. To simplify notation from now on we set \\

\begin{itemize}
\item $\F^{\prime}=\fqc \F$. \\
\item $G=\aut_{\fq}(\F)=\langle\rho\rangle$ generated by $\rho$ of order $q^2-1$.\\
\item $H$ the only subgroup of $G$ of order $q-1$, $\F^H=\fq(v)$ and $(\F^{\prime})^H=\fqc(v)$.\\
\item $\bar{G}=\aut_{\fqc}(\F).$\\
\end{itemize}

Assume that $p>2$. Let $\lambda \in \fqs$ be a primitive $2(q-1)$-th root of unity (that is, $\lambda^{q-1}=-1$ and $\lambda^{2}=\xi$ is a primitive $(q-1)$-th root of unity). Then $\bar{G}$ also counts with the automorphism
$$
\mu:(v,y) \mapsto (-v-a,\lambda y).
$$ 
Note that $\mu^2=\rho^{q+1}$ is a generator of $H$. From the proof of Theorem \ref{gencyc}, one can deduce that $\rho(v)=(-(a+\alpha)v-b)/(v-\alpha)$ and $\rho(y)=\xi^{j}(v-\alpha)y$ for some $j \in \{0,\ldots, q-2\}$ and $\alpha \in \fq$. Thus, one can check that $\mu \rho \mu^{-1} \in G$ and we conclude that $ \mu $ normalizes $G$. In fact, the restriction of $\mu \rho \mu^{-1}$ to $\fqc(v)$ is an automorphism of $\fqc(v)$, and it coincides with the restriction of $\rho^{-1}$ to $\fqc(v)$. Thus $\rho(\mu \rho \mu^{-1})$ fixes $\fqc(v)$ elementwise, which means that  $\rho(\mu \rho \mu^{-1}) \in H$ as $\fqc(v)=(\F^{\prime})^H$. Hence $\rho(\mu \rho \mu^{-1})=\rho^{d(q+1)}$ for some $d\geq 1$, and therefore $\mu \rho \mu^{-1} \in \langle\rho\rangle=G$.

 Denote $N:=\langle \rho, \mu \rangle$ the subgroup of $\bar{G}$ generate by $\rho$ and $\mu$. Then 
$$
|N|=2|G|=2(q^2-1).
$$ 

A similar situation holds when $p=2$. As a matter of fact, in this case we have the involution $\omega:(v,y) \mapsto (v+a,y)$, and  $\omega \rho \omega \in G$. Note that $\omega$ is defined over $\fq$, it normailzes $G$ and $N:=\langle \rho, \omega \rangle=G \rtimes \langle \omega\rangle$ has order $2(q^2-1)$.
\begin{rem}
In \cite[Theorem 4.6]{MXY2016}, it was overlooked that $\omega$ does not commute with all elements of $G$. In fact, there is no abelian automorphism group in characteristic $2$ with more than $4g+2$ elements by Theorem \ref{orderab}.
\end{rem}

\begin{prop}\label{normal}
The normalizer of $G$ in $\bar{G}$ is $N$.
\end{prop}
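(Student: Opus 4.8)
Since it has already been shown that $N\subseteq N_{\bar{G}}(G)$ and $|N|=2(q^2-1)$, the proposition reduces to the bound $|N_{\bar{G}}(G)|\le 2(q^2-1)$. My plan is to restrict the elements normalizing $G$ to the fixed field $(\F^{\prime})^H=\fqc(v)$ and argue inside $\PGL(2,\fqc)=\aut_{\fqc}(\fqc(v))$. Let $\tau\in N_{\bar{G}}(G)$. Conjugation by $\tau$ is an automorphism of the cyclic group $G$, hence preserves its unique subgroup $H$ of order $q-1$; therefore $\tau$ maps $\fqc(v)=(\F^{\prime})^H$ to itself and restricts to an automorphism $\bar\tau\in\PGL(2,\fqc)$. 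This gives a homomorphism $\Theta\colon N_{\bar{G}}(G)\to\PGL(2,\fqc)$, $\tau\mapsto\bar\tau$, whose kernel is the set of elements fixing $\fqc(v)$ pointwise, that is $\gal(\F^{\prime}/\fqc(v))=H$ (using $H\le G\le N_{\bar{G}}(G)$). Hence $|N_{\bar{G}}(G)|=(q-1)\,|\Theta(N_{\bar{G}}(G))|$, so it suffices to prove $|\Theta(N_{\bar{G}}(G))|\le 2(q+1)$.

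Next I would analyze $\bar{G}_1:=\Theta(G)=G/H$, a cyclic subgroup of $\PGL(2,\fqc)$ of order $q+1$; it actually lies in $\PGL(2,\fq)$, since $G$ is defined over $\fq$ and thus acts on $\fq(v)=\F^H$. By Remark \ref{Gfix}, $G$ fixes $Q_\beta$ and $Q_\gamma$, so $\bar{G}_1$ fixes the two places $\beta,\gamma$ of $\fqc(v)$ lying below them; these are the roots of the irreducible polynomial $M=T^2+aT+b$, hence $\beta,\gamma\in\mathbb{P}^1(\fqs)\setminus\mathbb{P}^1(\fq)$ and are interchanged by the Frobenius. Because $\gcd(q+1,p)=1$, every nontrivial element of $\bar{G}_1$ is semisimple with exactly two fixed points, so the fixed-point set of $\bar{G}_1$ is precisely $\{\beta,\gamma\}$. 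For $\tau\in N_{\bar{G}}(G)$ we get $\bar\tau\bar{G}_1\bar\tau^{-1}=\Theta(\tau G\tau^{-1})=\bar{G}_1$, so $\bar\tau$ permutes $\{\beta,\gamma\}$. On the other hand, $\bar\tau$ must preserve the branch locus of the Kummer extension $\F^{\prime}/\fqc(v)$ given by $y^{q-1}=-(v^2+av+b)/(v^q-v)$; inspecting the orders of zeros and poles of the right-hand side shows that for $q>2$ this branch locus equals $\{\beta,\gamma\}\cup\mathbb{P}^1(\fq)$. Hence $\bar\tau(\mathbb{P}^1(\fq))=\mathbb{P}^1(\fq)$, and since $q+1\ge 3$ this forces $\bar\tau\in\PGL(2,\fq)$. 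Thus $\Theta(N_{\bar{G}}(G))\subseteq N_{\PGL(2,\fq)}(\bar{G}_1)$.

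It then remains to bound $|N_{\PGL(2,\fq)}(\bar{G}_1)|$. This group acts on the two-element set $\{\beta,\gamma\}$, and the kernel of the action is contained in the stabilizer of $\beta$ in $\PGL(2,\fq)$; that stabilizer, being a finite subgroup of the maximal torus fixing both $\beta$ and its conjugate $\gamma$, is cyclic, and its nontrivial elements are non-split (their fixed points are not $\fq$-rational), hence of order dividing $q+1$. Since it contains $\bar{G}_1$ of order $q+1$, it equals $\bar{G}_1$. Therefore $|N_{\PGL(2,\fq)}(\bar{G}_1)|\le 2(q+1)$, so $|N_{\bar{G}}(G)|\le(q-1)\cdot 2(q+1)=2(q^2-1)=|N|$, and combined with $N\subseteq N_{\bar{G}}(G)$ this yields $N_{\bar{G}}(G)=N$. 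In characteristic $2$ the argument is identical, with $\omega$ in place of $\mu$. The step I expect to demand the most care is verifying $\bar\tau\in\PGL(2,\fq)$ and the order computation $|N_{\PGL(2,\fq)}(\bar{G}_1)|=2(q+1)$ — essentially controlling how a non-split torus of $\PGL(2,\fq)$ sits in $\PGL(2,\fqc)$ and acts on $\mathbb{P}^1$ — together with the bookkeeping that $\ker\Theta$ is exactly $H$ and that the branch locus is exactly $\{\beta,\gamma\}\cup\mathbb{P}^1(\fq)$.
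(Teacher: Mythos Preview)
Your argument is correct, but it takes a different route from the paper's. The paper descends one step further, to the rational fixed field $(\F')^G$ (whose genus was shown to be $0$ in Lemma~\ref{lemma1}): every element $\sigma$ of the normalizer $N'$ induces an automorphism $\tilde\sigma$ of $(\F')^G$, and $N'$ permutes the three short $G$-orbits $\Omega_1,\Omega_2,\Omega_3$ lying over three distinct places of $(\F')^G$. If $\sigma$ fixes $Q_\beta$ and $Q_\gamma$ then $\tilde\sigma$ fixes all three of these places, hence is the identity (a M\"obius map with three fixed points), so $\sigma\in G$. This gives $[N':G]\le 2$ immediately, without any computation inside $\PGL(2,\fq)$.

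By contrast you stop at $(\F')^H=\fqc(v)$ and work explicitly in $\PGL(2,\fqc)$: you show the image of the normalizer actually lands in $\PGL(2,\fq)$ (using that it preserves both the branch locus and the fixed set $\{\beta,\gamma\}$ of the cyclic group $\bar G_1$, hence preserves $\mathbb{P}^1(\fq)$), and then identify $\bar G_1$ with the non-split torus of order $q+1$ to bound its normalizer by $2(q+1)$. This is a perfectly valid and more hands-on argument; it trades the paper's one-line ``three fixed points on $\mathbb{P}^1$'' for some structural information about how $G/H$ sits in $\PGL(2,\fq)$, at the cost of a bit more bookkeeping (checking the branch locus, the kernel $\ker\Theta=H$, and the stabilizer of a non-$\fq$-rational point in $\PGL(2,\fq)$). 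Both approaches ultimately hinge on the same underlying fact---that an automorphism of $\mathbb{P}^1$ is determined by three points---but the paper applies it at the level of $(\F')^G$ rather than $(\F')^H$.
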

\begin{proof}
Denote such normalizer by $N^\prime$ and recall that from the proof of Lemma \ref{lemma1} the orbits of $G$ in the set of places of $\F^{\prime}$ are 
$\Omega_1=\{P_0,\ldots,P_{q-1},P_\infty\}$, where $P_j$ are all unique extensions of $\fq$-rational places of $\F$, $\Omega_2=\{Q_\beta\}$ and $\Omega_3=\{ Q_\gamma \}$, where $Q_\beta,Q_\gamma $ are the extensions of a place  of degree $2$ of $\F$.

Since every $ \sigma \in N^{\prime}$ induces an automorphism $\tilde{\sigma} \in \aut_{\fqc}((\F^{\prime})^G)$ that acts on the set of places of $(\F^{\prime})^G$ as $\sigma$ does on the set of orbits of $G$, we have that there exists a permutation representation $\varphi:N^{\prime} \rightarrow  \operatorname{Sym}\{Q_\beta, Q_\gamma \}$. 
Let $\sigma \in \ker(\varphi)$ and denote by $\infty, \beta$ and $\gamma$ the places of $(\F^{\prime})^G$ lying under $\Omega_1,\Omega_2$ and $\Omega_3$, respectively. From the fact that $\sigma \in \ker(\varphi)$, we have that $\tilde{\sigma}$ fixes both $\beta$ and $\gamma$. 
Since $\sigma$ must preserve $\Omega_1$, we also have that $\tilde{\sigma}$ fixes $\infty$ as well. Hence, $\tilde{\sigma}$ has $3$ fixed places on a rational function field, which from \cite[Theorem 1]{VM1980} gives that $\tilde{\sigma}$ is the identity. Thus $\sigma \in G$. Therefore, $\ker(\varphi)=G$, which gives $|N^{\prime}|=2(q^2-1)$. Since $N \leq N^{\prime}$, we have $N=N^{\prime}$. 
\end{proof}

Recall that, for each $\fq$-rational place $P_j$ of $\F/\fq$, we have that the conorm of $P_j$ in $\F^{\prime}/\F$ is a single place, which we will also denote by $P_j$, that is, $\con_{\F^{\prime}/\F}(P_j)=P_j$ for $j=\infty,0,1\ldots,q-1$. In the same way, the place $Q$ of $\F/\fq$ over the zero of $v^2+av+b$ has degree $2$, and then $\con_{\F^{\prime}/\F}(Q)=Q_\beta+Q_\gamma$. Since for all divisor $D$ of $\F/\fq$ we have that every basis of the Riemann-Roch space $\mathcal{L}(D)$ over $\fq$ is also a basis of $\mathcal{L}(\con_{\F^{\prime}/\F}(D))$ over $\fqc$ (see e.g. \cite[Theorem 3.6.3]{St}), we conclude that the analogous of \cite[Lemmas 4.2 and 4.3]{MXY2016} hold only replacing $\fq$ by $\fqc$ and the place of degree $2$ by the sum of two places of degree $1$, that is

\begin{equation}\label{L1}
\mathcal{L}((2q-3)P_\infty+Q_\beta+Q_\gamma)=\mathcal{L}((2q-3)P_\infty)+ \fqc \cdot \frac{1}{y} +\fqc \cdot \frac{v}{y},
\end{equation}
and 
\begin{equation}\label{L2}
\mathcal{L}((q-2)P_\infty+Q_\beta+Q_\gamma)=\fqc+ \fqc \cdot \frac{1}{y}.
\end{equation}
 
 Moreover, by \cite[Proposition 4.1]{MXY2016}we also have

\begin{equation}\label{L3}
\mathcal{L}((q-1)P_\infty)=\fqc+ \fqc \cdot v.
\end{equation}

Therefore, when $q\neq 2,3$, the following holds.

\begin{prop}\label{vxing}
Assume that $q>3$. Then the stabilizer of $P_\infty$ in $\bar{G}$ is $\langle \mu \rangle$ if $p>2$, and $\langle \omega \rangle \times H$ if $p=2$.
In particular, the stabilizer of $P_j$ in $\bar{G}$ have size $2(q-1)$  for all $j=\infty,0,\ldots,q-1$.
\end{prop}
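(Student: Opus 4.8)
The plan is to take an arbitrary $\tau$ in the stabilizer $\bar{G}_{P_\infty}$ and show it is forced into $\langle\mu\rangle$ (if $p>2$) or into $\langle\omega\rangle\times H$ (if $p=2$); the reverse inclusion is immediate, since $\mu$, $\omega$ and all of $H$ fix $P_\infty$, and $|\langle\mu\rangle|=|\langle\omega\rangle\times H|=2(q-1)$ (recall $\mu^2$, hence $\mu$, has order $2(q-1)$, and in characteristic $2$ the involution $\omega$ commutes with $H$ and is non-trivial because $a\neq 0$, else $T^2+b$ would be a square). First I would use that, as $\tau$ fixes $P_\infty$, it preserves the Riemann--Roch space $\mathcal{L}((q-1)P_\infty)$, which by \eqref{L3} equals $\fqc+\fqc\cdot v$; since $\tau$ is an automorphism, $\tau(v)\notin\fqc$, so $\tau(v)=cv+d$ for some $c\in\fqc^{*}$, $d\in\fqc$. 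Consequently $\tau$ maps $\fqc(v)=(\F^{\prime})^{H}$ onto itself and therefore normalizes $H=\gal(\F^{\prime}/\fqc(v))$. Because $h(v):=-\tfrac{v^2+av+b}{v^q-v}$ has a simple zero at a root $\beta$ of $M$, it is not a proper power in $\fqc(v)$, so $\F^{\prime}/\fqc(v)$ is a genuine Kummer extension and Proposition~\ref{prop1} applies: $\tau(y)=f(v)\,y^{k}$ with $\gcd(k,q-1)=1$, $1\le k\le q-2$, and $f(v)^{q-1}=h(cv+d)/h(v)^{k}$.

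Next I would pin down $k$ and $f$. Comparing orders at the place $v=\infty$ of $\fqc(v)$, where $\mathrm{ord}_{\infty}(h)=q-2$, yields $(q-1)\mid(q-2)(1-k)$, and since $\gcd(q-1,q-2)=1$ this forces $k=1$. With $k=1$, the divisor $\divi(h(cv+d))-\divi(h(v))$ is a degree-zero divisor on $\mathbb{P}^{1}$ which, being a $\Z$-combination of at most four multiplicity-free divisors (the two simple zeros $\beta_1,\beta_2$ of $h$, the $q$ simple poles $\alpha\in\fq$, and their images under $v\mapsto(v-d)/c$), has all coefficients in $\{-2,-1,0,1,2\}$; as it equals $\divi(f(v)^{q-1})$, every coefficient is divisible by $q-1\ge 3$ — this is exactly where the hypothesis $q>3$ is used — so the divisor vanishes. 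Hence $h(cv+d)=c_0\,h(v)$ for some $c_0\in\fqc^{*}$, and $f(v)=f_0\in\fqc^{*}$ with $f_0^{q-1}=c_0$.

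Finally, $\divi(h(cv+d))=\divi(h(v))$ says the affine map $v\mapsto cv+d$ permutes the finite zero set $\{\beta_1,\beta_2\}$ and the pole set $\{\alpha:\alpha\in\fq\}$ of $h$; permuting $\fq$ forces $c\in\fq^{*}$, $d\in\fq$. Either $\beta_1,\beta_2$ are both fixed, in which case the map has three fixed points ($\beta_1,\beta_2,\infty$) and is the identity, so $\tau(v)=v$ and $\tau\in H$; or they are interchanged, and solving $c\beta_1+d=\beta_2$, $c\beta_2+d=\beta_1$ gives $c=-1$, $d=-a$ when $p>2$ (so $\tau(v)=\mu(v)$ and $\tau\in\mu H=\langle\mu\rangle$, using $H=\langle\mu^2\rangle$), and $c=1$, $d=a$ when $p=2$ (so $\tau(v)=\omega(v)$ and $\tau\in\omega H$). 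This gives $\bar{G}_{P_\infty}=\langle\mu\rangle$ if $p>2$ and $\bar{G}_{P_\infty}=\langle\omega\rangle\times H$ if $p=2$, each of order $2(q-1)$. For the last assertion, $\Omega_1=\{P_\infty,P_0,\dots,P_{q-1}\}$ is a single orbit of $G\le\bar G$, so the $P_j$ are pairwise $\bar G$-conjugate and $|\bar{G}_{P_j}|=2(q-1)$ for all $j$. I expect the main obstacle to be the divisor bookkeeping in the middle step — in particular, seeing that comparing at $v=\infty$ first eliminates $k$, and that once $q>3$ the remaining divisor is too small in its coefficients to be a non-zero $(q-1)$-th multiple — while the conceptual heart is the opening observation that fixing $P_\infty$ forces $\tau$ to act on $v$ as a Möbius, hence affine, transformation normalizing $H$.
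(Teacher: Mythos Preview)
Your argument is correct and reaches the same conclusion as the paper, but the middle portion---how you constrain $\tau(y)$ once you know $\tau(v)=cv+d$---follows a genuinely different path. The paper first argues that $c,d\in\fq$ by a pigeonhole on the totally ramified places of $\fqc(v)$ (this is where the hypothesis $q>3$ enters there): since $\tau$ normalizes $H$ it permutes the $q+3$ places under $\Omega_1\cup\{Q_\beta,Q_\gamma\}$, and with $q\ge 4$ at least two of the $P'_\alpha$ must land on $\fq$-rational places, forcing $c,d\in\fq$. Having $c,d\in\fq$ the paper knows $\tau$ preserves $\{Q_\beta,Q_\gamma\}$, so it can invoke the second Riemann--Roch space $\mathcal{L}((q-2)P_\infty+Q_\beta+Q_\gamma)=\fqc+\fqc\cdot y^{-1}$ (this is the content of the reference to \cite[Proposition~4.4]{MXY2016}) to obtain $\tau(y)=y/(ey+f)$; comparing with Proposition~\ref{prop1} then yields $k=1$, $e=0$ and $f$ constant, after which the defining equation is expanded to pin down $(c,d)$.

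You bypass the second Riemann--Roch space entirely: after Proposition~\ref{prop1} you read off $k=1$ from the valuation of $h$ at $v=\infty$, and then observe that every coefficient of $\divi\bigl(h(cv+d)/h(v)\bigr)$ lies in $\{-2,\dots,2\}$ while being divisible by $q-1$, hence vanishes once $q>3$. This simultaneously gives $f$ constant and that $v\mapsto cv+d$ permutes the finite zero and pole sets of $h$, from which $c,d\in\fq$ and the dichotomy on $\{\beta_1,\beta_2\}$ fall out. Your approach is more self-contained (no external citation, only \eqref{L3} and Proposition~\ref{prop1}) and the role of $q>3$ is arguably more transparent; the paper's approach has the advantage of making the invariance of $\{Q_\beta,Q_\gamma\}$ under $\bar G_{P_\infty}$ explicit at an earlier stage, which is exactly what feeds into Corollary~\ref{orbdif}.
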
  
\begin{proof}
The proof will be done for the case $p>2$; the case $p=2$ is analogous. First, $H=\langle\mu^2\rangle$ fixes $P_{\infty}$, and $(\F^{\prime})^{H}=\fqc(v)$. Moreover, $(\F^{\prime})^{\langle \mu\rangle}=\fqc(v^2+av)$, which proves that $\mu$ fixes $P_\infty$.
Now let $\sigma \in \bar{G}_{P_\infty}$.  Since $\sigma(\mathcal{L}((q-1)P_\infty))=\mathcal{L}((q-1)P_\infty)$, From \eqref{L3} we obtain that $\sigma(v)=cv+d$, where $c,d \in \fqc$, $c \neq 0$. In particular, the restriction of $\sigma$ to $\fqc(v)$ gives an automorphism of $\fqc(v)$. Hence, it follows from basic Galois Theory that $\sigma$ normalizes $H$, whence it acts on the set of fixed places of $\F^{\prime}$ as it does on the set of places of $\fqc(v)$ lying under them. For simplicity, we will denote the restriction of $\sigma$ to $\fqc(v)$ also by $\sigma$. Denote by $P^{\prime}_{\infty}$ the place of $\fqc(v)$ lying under $P_{\infty}$. Then $\sigma(P^{\prime}_{\infty})=P^{\prime}_{\infty}$. Since $q>3$, we certainly have that, besides $P^{\prime}_{\infty}$, there are at least $2$ $\fq$-rational places of $\fqc(v)$ sent by $\sigma$  to $\fq$-rational places of $\fqc(v)$. Hence $c,d \in \fq$; furthermore, $\sigma$ preserves the set $\Omega_1$ of $\fq$-rational places of $\F^{\prime}$.

Therefore, arguing exactly as in the proof of \cite[Proposition 4.4]{MXY2016} we obtain $\sigma(y)=\frac{y}{ey+f}$, where $e,f \in \fqc$, $f \neq 0$. Since $\sigma|_{\fqc(v)} \in \aut_{\fqc}(\fqc(v))$, we have from Proposition \ref{prop1} that $\sigma(y)=B(v)y^k$ for some $k \in \{1,\ldots,q-2\}$, $B(v) \in \fqc(v)$. Thus
$$
B(v)y^{k}=\frac{y}{ey+f} \Longrightarrow eB(v)y^{k+1}+fB(v)y^{k}-y=0,
$$
which gives $k=1$, $B(v)=f^{-1}$ and $e=0$. In other words, $\sigma(y)=y/f$. Now, applying $\sigma$ on both sides of 
$$
y^{q-1}=-\frac{v^2+av+b}{v^q-v}
$$
gives 
\begin{equation}\label{auxc1}
(c^qv^q-cv+d^q-d)(v^2+av+b)=f^{q-1}(v^q-v)(c^2v^2+(2d+a)cv+d^2+ad+b).
\end{equation}
Equation \eqref{auxc1} immediately gives that $c=c^2f^{q-1}$, that is, $f^{q-1}=c^{-1}$. Hence, \eqref{auxc1} reads
$$
c^2(v^2+av+b)=(c^2v^2+(2d+a)cv+d^2+ad+b),
$$
implying that 
 $ca=2d+a$;
and $c^2b=d^2+ad+b$. .

Assume that $a=0$. Then $d=0$ and $c^2=1$; in particular, $f \in \fqs$. If $c=1$, then $f \in \fq$, and then $\sigma(v,y)=(v,\xi^j y)$ for some $j\geq 0$, which implies that $\sigma \in H \leq \langle \mu \rangle$. If $c=-1$, then $\sigma(v,y)=(-v,\lambda^\ell y)$, where $\ell >0$ is odd. Thus $\sigma=\mu^\ell \in \langle \mu \rangle$.

Let us assume that $a \neq 0$. Thus $c=\frac{2d}{a}+1$.  Replacing this in $c^2b=d^2+ad+b$, we obtain
$$
\frac{4b}{a}\left(1+\frac{d}{a}\right)=a+d,
$$
which gives $a(4b-a^2)=d(a^2-4b)$. Since $a^2-4b \neq 0$ (as it is the discriminant of $T^2+aT+b$), we conclude that $d=-a$. Therefore $c=-1$ and $f \in \fqs$ such that $f^{q-1}=-1$. Hence, $\sigma \in \langle \mu \rangle$.

The last claim follows from the fact that $\langle\rho \rangle$ acts transitively on $\Omega_1=\{P_0,\ldots,P_{q-1},P_\infty\}$.
\end{proof}

\begin{cor}\label{orbdif}
Assume that $q>3$. If $A \in \{P_j \ | \ j=\infty,0,\ldots,q-1\}$ and $B \in \{Q_\beta,Q_\gamma\}$, then $A$ and $B$ cannot belong to the same orbit of $\bar{G}$.
\end{cor}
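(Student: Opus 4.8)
The plan is to separate $A$ from $B$ by the size of their stabilizers in $\bar G$, using the standard fact that two places lying in a common orbit of a group have conjugate, hence equinumerous, stabilizer subgroups. One should not try to argue by degrees here: over $\fqc$ all of $P_j$, $Q_\beta$, $Q_\gamma$ have degree one, so the degree carries no information and the order of the stabilizer is the invariant that does the job.

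First I would invoke Proposition \ref{vxing}, which gives (since $q>3$) that $\bar G_{P_j}$ has order exactly $2(q-1)$ for every $j\in\{\infty,0,\ldots,q-1\}$; in particular $|\bar G_A|=2(q-1)$. Next, Remark \ref{Gfix} tells us that the subgroup $G\cong\fqs^*$ fixes each of $Q_\beta$ and $Q_\gamma$, so $G\leq \bar G_B$ and therefore $|\bar G_B|\geq |G|=q^2-1$.

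Finally I would conclude with the numerical comparison
\[
|\bar G_B|\;\geq\;q^2-1\;=\;(q-1)(q+1)\;>\;2(q-1)\;=\;|\bar G_A|,
\]
which holds because $q>3$. Hence $\bar G_A$ and $\bar G_B$ have different orders, so they cannot be conjugate in $\bar G$, and consequently $A$ and $B$ cannot belong to the same $\bar G$-orbit. There is no substantial obstacle beyond recognizing that the stabilizer size, not the degree, is the right quantity to compare.
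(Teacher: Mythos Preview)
Your proof is correct and follows essentially the same approach as the paper's own proof: both invoke Proposition \ref{vxing} for $|\bar G_A|=2(q-1)$ and Remark \ref{Gfix} for $|\bar G_B|\geq q^2-1$, then conclude from the mismatch of stabilizer sizes. Your added numerical comparison and the remark about degrees are harmless elaborations, but the argument is the same.
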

\begin{proof}
If $A$ and $B$ are in a same orbit of $\bar{G}$, then their respective stabilizers in $\bar{G}$ have the same order. However, by Proposition \ref{vxing}, $|\bar{G}_A|=2(q-1)$ and we know  that $|\bar{G}_B|\geq q^2-1$ by  Remark \ref{Gfix}.
\end{proof}

We now proceed to show that $\bar{G}$ is tame when $p>2$ and $q \neq 3$. 

\begin{lem}\label{pgrupo}
Assume that $p>2$, $q\neq 3$ and that $\bar{G}$ is non-tame. Then the stabilizers of $Q_\beta$ and $Q_{\gamma}$ are non-tame. 
\end{lem}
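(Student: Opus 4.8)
I would argue by contradiction, the main tool being the Riemann--Hurwitz formula applied to the Galois cover $\F^{\prime}/(\F^{\prime})^{\bar{G}}$. First observe that the automorphism $\mu\colon(v,y)\mapsto(-v-a,\lambda y)$ interchanges $Q_\beta$ and $Q_\gamma$: the polynomial $v^2+av+b$ is fixed by $v\mapsto -v-a$, while its two roots in $\fqc$, whose sum is $-a$, are exchanged, so $\mu$ swaps the two places of $\fqc(v)$ lying under $Q_\beta$ and $Q_\gamma$. Hence $\bar{G}_{Q_\beta}$ and $\bar{G}_{Q_\gamma}$ are conjugate in $\bar{G}$, and it is enough to prove that $\bar{G}_{Q_\beta}$ is non-tame. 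Assuming it is not, i.e. $\bar{G}_{Q_\beta}$ is tame, also $\bar{G}_{Q_\gamma}$ is tame, and the whole $\bar{G}$-orbit of $Q_\beta$, which contains $Q_\gamma$, is tame. Since $q>2$ and $q\neq 3$ we have $q\ge 5$, so $g\ge 2$ and $2g-2=q^2-q-4$, a number coprime to $p$ because $q\equiv 0\pmod p$ and $p>2$.

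The next step is to produce a non-tame short orbit disjoint from the ones already controlled. As $\bar{G}$ is non-tame it contains an element $\sigma$ of order $p$; if $\sigma$ fixed no place of $\F^{\prime}$ then $\F^{\prime}/(\F^{\prime})^{\langle\sigma\rangle}$ would be unramified of degree $p$ and \eqref{rhf} would give $p\mid 2g-2$, which is impossible. So $\sigma$ fixes some place $P^{\ast}$, and $\bar{G}_{P^{\ast}}$ is non-tame. By Proposition \ref{vxing} every place in $\Omega_1=\{P_0,\dots,P_{q-1},P_\infty\}$ has stabilizer of order $2(q-1)$, coprime to $p$, so $P^{\ast}\notin\Omega_1$; and $P^{\ast}$ cannot lie in the $\bar{G}$-orbit of $Q_\beta$, which is tame. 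By Lemma \ref{lemma1} and Remark \ref{Gfix} the only places of $\F^{\prime}$ with non-trivial $G$-stabilizer are those in $\Omega_1\cup\{Q_\beta,Q_\gamma\}$; hence $G_{P^{\ast}}$ is trivial, so the $\bar{G}$-orbit $\mathcal{O}$ of $P^{\ast}$ contains the $G$-orbit of $P^{\ast}$ and therefore has size $|\mathcal{O}|\ge|G|=q^2-1$. Thus $\mathcal{O}$ is a non-tame short orbit, distinct from the $\bar{G}$-orbit of $P_\infty$ and from the $\bar{G}$-orbit of $Q_\beta$, the latter two being distinct by Corollary \ref{orbdif}.

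Finally I would apply Riemann--Hurwitz to $\F^{\prime}/(\F^{\prime})^{\bar{G}}$. Writing $\bar{g}$ for the genus of $(\F^{\prime})^{\bar{G}}$, $k\ge 3$ for the number of short orbits with sizes $\ell_1,\dots,\ell_k$, and $\epsilon_j\ge 0$ for the wild part of the different exponent at a place of the $j$-th orbit (so $\epsilon_j\ge p-1\ge 2$ for each non-tame orbit, by the ramification-group formula for the different and Theorem \ref{stab}), one has
\[
q^2-q-4=|\bar{G}|(2\bar{g}-2+k)-\sum_{j=1}^{k}\ell_j+\sum_{j=1}^{k}\ell_j\epsilon_j .
\]
Here the orbit of $P_\infty$ has size $\tfrac{|\bar{G}|}{2(q-1)}$, the orbit of $Q_\beta$ has size at most $\tfrac{|\bar{G}|}{q^2-1}$ (as $G$ fixes $Q_\beta$, by Remark \ref{Gfix}), the orbit $\mathcal{O}$ has size at least $q^2-1$ with $\epsilon_{\mathcal{O}}\ge 2$, and every short orbit has size at most $|\bar{G}|/2$. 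Discarding all but the term $|\mathcal{O}|\,\epsilon_{\mathcal{O}}\ge 2(q^2-1)$ from the last sum, these bounds show that the right-hand side is at least $\tfrac{1}{3}|\bar{G}|+2(q^2-1)$ for all $q\ge 5$; since $|\bar{G}|\ge|N|=2(q^2-1)$, this is strictly larger than $q^2-q-4$, a contradiction. Therefore $\bar{G}_{Q_\beta}$, and with it $\bar{G}_{Q_\gamma}$, is non-tame. The step that needs care is precisely this last estimate: one must verify that the short orbits carrying large stabilizers contribute a negative amount bounded in terms of $|\bar{G}|$, the point being that $\mathcal{O}$ is simultaneously \emph{large} (size at least $q^2-1$) and \emph{wild} ($\epsilon_{\mathcal{O}}\ge 2$), so that its contribution alone already overshoots $2g-2=q^2-q-4$.
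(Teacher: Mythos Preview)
Your argument is correct and genuinely different from the paper's. The paper takes an element $\sigma$ of order $p$ and splits according to whether $U:=G\cap\sigma G\sigma^{-1}$ is trivial. When $|U|>1$, cyclicity of $G$ forces $\sigma$ to act on the short $U$-orbits, and combining Corollary~\ref{orbdif} with $p>2$ pins $\sigma(Q_\beta)=Q_\beta$ directly. When $|U|=1$, the paper deduces $|\bar G|>(q^2-1)^2$, checks this exceeds $84(g-1)$ (with a separate calculation for $q=5$), and then invokes the classification Theorem~\ref{orbits} to force case~(d), where the unique non-tame short orbit must be $\{Q_\beta,Q_\gamma\}$.

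Your route avoids Theorem~\ref{orbits} entirely. You exploit two clean facts: (i) $p\nmid 2g-2=q^2-q-4$ (since $p>2$), so every element of order $p$ fixes a place; (ii) such a fixed place $P^\ast$ lies outside $\Omega_1\cup\{Q_\beta,Q_\gamma\}$, hence has trivial $G$-stabilizer and its $\bar G$-orbit $\mathcal O$ has size $\ge q^2-1$. The wild contribution $|\mathcal O|\,\epsilon_{\mathcal O}\ge 2(q^2-1)$ alone already dominates $2g-2$, and the remaining Riemann--Hurwitz terms only help. Your final numerical claim is easily verified: with the three distinct short orbits and the bounds $\ell_{P_\infty}=\tfrac{|\bar G|}{2(q-1)}$, $\ell_{Q_\beta}\le\tfrac{|\bar G|}{q^2-1}$, $\ell_{\mathcal O}\le\tfrac{|\bar G|}{2}$, one has $\tfrac{1}{2(q-1)}+\tfrac{1}{q^2-1}+\tfrac{1}{2}\le\tfrac{2}{3}$ for $q\ge5$, so $|\bar G|(2\bar g-2+k)-\sum\ell_j\ge\tfrac{1}{3}|\bar G|$ when $k=3$ and $\bar g=0$, and the bound only improves for $k\ge4$ or $\bar g\ge1$. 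What your approach buys is a self-contained Riemann--Hurwitz estimate with no appeal to the large-automorphism classification; what the paper's approach buys is a more structural conclusion in the first case (it actually exhibits a specific $\sigma$ fixing $Q_\beta$), though both reach the same lemma.
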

\begin{proof}
Let $\sigma \in \bar{G}$ be an automorphism of order $p$ and set $U:=G \cap (\sigma G \sigma^{-1})$. Assume that $|U|>1$. Since $G$ is cyclic, one has $U=\sigma U \sigma^{-1}$. Hence, $\sigma$ acts on the set of short orbits of $U$. However, $U \leq G$ and the short orbits of $G$ are $\Omega_1=\{P_0,\ldots,P_{q-1},P_\infty\}$, $\Omega_2=\{Q_\beta\}$ and $\Omega_3=\{Q_{\gamma}\}$. Since $U$ fixes $Q_{\beta}$, then 
$$
\sigma(Q_{\beta}) \in \Omega_1 \cup \Omega_2 \cup \Omega_3. 
$$
 But from Corollary \ref{orbdif}, we have that  $\sigma(Q_{\beta}) \notin \Omega_1$, which means that the orbit of $Q_{\beta}$ by $\langle \sigma \rangle$ is contained in $\{Q_\beta,Q_{\gamma}\}$. If $\sigma$ does not fix $Q_\beta$, then from the Orbit-stabilizer Theorem we conclude that $p \mid 2$, which contradicts $p>2$. Hence $\sigma$ fixes $Q_{\beta}$. In the same way, $\sigma$ fixes $Q_{\gamma}$. 

Now, assume that $|U|=1$. In this case, we have that $|\bar{G}|>84(g-1)$. Indeed, the set $G \cdot (\sigma G \sigma^{-1}) \subset \bar{G}$ has $(q^2-1)^2$ elements. In particular, $|\bar{G}| >(q^2-1)^2$ as $p$ divides $|\bar{G}|$. From $g=\frac{(q+1)(q-2)}{2}$ we obtain that $|\bar{G}| \leq 84(g-1)$ implies $q\leq 5$. However $|\bar{G}|=2(q^2-1)pk$ for some $k \geq 1$, and since $|\bar{G}| > (q^2-1)^2$, we conclude that $k \geq 3$ when $q=5$. Therefore, we obtain $|\bar{G}|>84(g-1)$. 

We now apply Theorem \ref{orbits}.
Since the $P_j \in \{P_0,\ldots,P_{q-1},P_\infty\}$ and $Q_i \in \{ Q_\beta, Q_\gamma\}$  are not in the same orbit, we have by Proposition \ref{vxing} that (b) and (c) in Theorem \ref{orbits} can be ruled out. If (a) holds, then the proof of Theorem \ref{orbits} gives that the order of the tame stabilizers are $2$; then we can rule out this case, as $|\bar{G}_{P_j}|=2(q-1)$. Thus we have that (d) must hold in Theorem \ref{orbits}. Again by  Proposition \ref{vxing} and Corollary \ref{orbdif}, we conclude that the stabilizers of $Q_\beta$ and $Q_{\gamma}$ are non-tame.
\end{proof}

\begin{prop}\label{tame}
Assume that $p>2$ and $q \neq 3$. Then the full automorphism group  $\bar{G}$ is tame.
\end{prop}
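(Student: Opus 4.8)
The plan is to argue by contradiction. Assuming $\bar{G}$ is non-tame, I will locate a nontrivial $p$-subgroup inside the stabilizer $\bar{G}_{Q_\beta}$, pass to its fixed field, and play the genus of that quotient against the Riemann--Hurwitz formula and the abelian bound of Theorem \ref{orderab}.

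First I would apply Lemma \ref{pgrupo} to deduce that $\bar{G}_{Q_\beta}$ is non-tame; by Theorem \ref{stab} its $p$-Sylow subgroup $S_p$ is then nontrivial, normal in $\bar{G}_{Q_\beta}$, with $\bar{G}_{Q_\beta}/S_p$ cyclic of order prime to $p$. Since $G\leq\bar{G}_{Q_\beta}$ by Remark \ref{Gfix} and $\gcd(|G|,p)=1$, we have $G\cap S_p=\{1\}$, so $G$ normalizes $S_p$ and acts faithfully on $L:=(\F')^{S_p}$; let $g^{\ast}$ be the genus of $L$. Applying Riemann--Hurwitz to $\F'/L$, and using that $Q_\beta$ is totally and wildly ramified there, with different exponent at least $2(|S_p|-1)$, gives
$$
q^{2}-q-4 \;=\; 2g-2 \;\geq\; |S_p|(2g^{\ast}-2)+2(|S_p|-1),
$$
that is, $|S_p|\,g^{\ast}\leq g$. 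I would then split into three cases according to $g^{\ast}$.

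If $g^{\ast}\geq 2$, Theorem \ref{orderab} applied to $L$ with the abelian group $G$ forces $q^{2}-1\leq 4g^{\ast}+4$, hence $g^{\ast}\geq(q^{2}-5)/4$; together with $|S_p|\,g^{\ast}\leq g=(q+1)(q-2)/2$ this yields $|S_p|\leq 2(q^{2}-q-2)/(q^{2}-5)<2$ (as $q>3$), contradicting $|S_p|\geq p\geq 3$. If $g^{\ast}=1$, then $L$ is the function field of an elliptic curve $E$, so $\aut_{\fqc}(L)\cong E(\fqc)\rtimes\aut(E,O)$ and every place-stabilizer is isomorphic to $\aut(E,O)$, of order at most $12$ because $p$ is odd; since $G$ fixes the place of $L$ below $Q_\beta$ it embeds in such a stabilizer, whence $q^{2}-1\leq 12$ and $q\leq 3$, a contradiction.

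The case $g^{\ast}=0$ is the one I expect to be the main obstacle, since there the displayed inequality is empty. Here $L=\fqc(w)$ and $G$, being cyclic of order $q^{2}-1$ coprime to $p$, acts on $L$ as a subgroup of $\PGL_2(\fqc)$ conjugate into a torus: it fixes exactly two places $A_1,A_2$ of $L$, and all its other orbits of places of $L$ have size $q^{2}-1$. As $\Omega_1$ is a single $G$-orbit of places of $\F'$ of size $q+1<q^{2}-1$ and the ``place below'' map $\F'\to L$ is $G$-equivariant, $\Omega_1$ must lie entirely above one of the two fixed places, say $A_1$. But the fiber of $A_1$ in $\F'/L$ has cardinality at most $|S_p|$, a power of $p$, and is $G$-invariant, so every $G$-orbit in it has size dividing both $q^{2}-1$ and a power of $p$, forcing $G$ to fix the fiber pointwise; hence the fiber lies inside $\{Q_\beta,Q_\gamma\}$, the only $G$-fixed places of $\F'$ by Remark \ref{Gfix}. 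This contradicts the fact that the fiber contains $\Omega_1$, which consists of $q+1\geq 6$ rational places, none of which is $Q_\beta$ or $Q_\gamma$. Therefore $\bar{G}$ is tame.
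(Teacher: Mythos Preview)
Your overall architecture matches the paper's proof exactly: assume $\bar G$ non-tame, use Lemma~\ref{pgrupo} to place a nontrivial $p$-Sylow $S_p$ inside $\bar G_{Q_\beta}$, pass to $L=(\F')^{S_p}$, and split on the genus $g^\ast$ of $L$. Your treatment of $g^\ast\geq 2$ via $|S_p|\,g^\ast\le g$ together with Theorem~\ref{orderab} is correct and a bit sharper than the paper's version; your $g^\ast=1$ argument is essentially the same as the paper's (the paper quotes \cite[Theorem~11.94]{HKT} instead of writing out the elliptic automorphism bound).

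The gap is in the $g^\ast=0$ case. You assert that ``every $G$-orbit in [the fiber over $A_1$] has size dividing both $q^2-1$ and a power of $p$'', and hence is trivial. But the second divisibility is not justified: the $G$-orbits merely \emph{partition} a set of $p$-power cardinality, which does not force each orbit size to divide that cardinality. In fact your own set-up already contradicts the claim, since you have placed $\Omega_1$ inside this fiber, and $\Omega_1$ is a $G$-orbit of size $q+1$, coprime to $p$. So the step ``$G$ fixes the fiber pointwise'' fails as written.

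The paper closes this case differently, and the repair is short. Keep your observation that $\pi(\Omega_1)$ is a single $G$-fixed place of $L$ (so $\pi(\Omega_1)\in\{A_1,A_2\}$), and add that $\pi(Q_\beta),\pi(Q_\gamma)\in\{A_1,A_2\}$ as well, since $G$ fixes $Q_\beta,Q_\gamma$. Now check these three places of $L$ are pairwise distinct: $\pi(Q_\beta)\neq\pi(Q_\gamma)$ because $S_p$ fixes $Q_\beta$, so the fiber over $\pi(Q_\beta)$ is the singleton $\{Q_\beta\}$; and $\pi(P_0)\neq\pi(Q_\beta),\pi(Q_\gamma)$ because, by Corollary~\ref{orbdif}, $P_0$ lies in a different $\bar G$-orbit (hence different $S_p$-orbit) from $Q_\beta$ and $Q_\gamma$. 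Thus $\{A_1,A_2\}$ contains three distinct points, a contradiction. This is exactly the paper's argument (phrased there with $H$ in place of $G$, using \cite[Theorem~1]{VM1980}); your route via the full group $G$ works equally well once the faulty divisibility claim is replaced by this pigeonhole.
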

\begin{proof}
Assume that $p$ divides $|\bar{G}|$. Then by Lemma \ref{pgrupo}, we have that the stabilizer $(\bar{G})_{Q_\beta}$ of $Q_{\beta}$ is non-tame.  Denote by $S_p$ the p-Sylow subgroup of $(\bar{G})_{Q_\beta}$ of order $p^r$. We know that $S_p\triangleleft (\bar{G})_{Q_\beta}$ and $(\bar{G})_{Q_\beta}/S_p$ is cyclic with order not divisible by $p$, see Theorem \ref{stab}. 

Denote by $\tilde{g}$ the genus of the field $(\F^{\prime})^{S_p}$ fixed by $S_p$. By the Riemann-Hurwitz formula, we obtain 
$$
\tilde{g} < \frac{g}{p^r}+1-\frac{1}{p^r}.
$$

Since $(\F^{\prime})^{S_p}$ has an automorphism group isomorphic to $(\bar{G})_{Q_{\beta}}/S_p$ and $G \leq (\bar{G})_{Q_{\beta}}$, we have that there exists $\tilde{G} \cong G$ such that $\tilde{G} \leq \aut_{\fqc}((\F^{\prime})^{S_p})$.
First, let us suppose that $\tilde{g} >1$. Then by Theorem \ref{orderab}, we have that $|\tilde{G}| \leq 4\tilde{g}+4$ as $\tilde{G}$ is abelian. However, $|\tilde{G}|=q^2-1>2g+\sqrt{2g}$, which leads us to a contradiction, since $g>p^r(\tilde{g}-1)$+1. 

The case $\tilde{g}=1$ can be easily ruled out by \cite[Theorem 11.94]{HKT} since $\tilde{G}$ fixes a place of $(\F^{\prime})^{S_p}$ and $q^2-1 \notin \{2,4,6,12\}$. 

Finally, let us assume that $\tilde{g}=0$. Note that since $\rho$ normalizes $S_p$, we have that $\rho^{q+1}$ does it as well. In particular, there exists $\tilde{H}\leq \aut_{\fq}((\F^{\prime})^{S_p})$ such that $\tilde{H} \cong H$  , and $\tilde{H}$ acts on the places of $(\F^{\prime})^{S_p}$ as $H$ does on the set of orbits of $S_p$ in $\F^{\prime}$. Since $S_p$ fixes $Q_{\beta}$, we have that $Q_{\gamma}$ is in an orbit of $S_p$ distinct from the orbits of $P_0$ and $Q_{\beta}$.  Denote by $\tilde{P}$, $\tilde{Q_{\beta}}$ and $\tilde{Q_{\gamma}}$ the places of $(\F^{\prime})^{S_p}$ lying under $P_0$, $Q_{\beta}$ and $Q_{\gamma}$, respectively. Since $H$ fixes $P_0$, $Q_{\beta}$ and $Q_{\gamma}$, we must have then that $\tilde{H}$ fixes $\tilde{P}$, $\tilde{Q_{\beta}}$ and $\tilde{Q_{\gamma}}$. But this is a contradiction, since a cyclic automorphism group in a rational function field fixes at most two places, see \cite[Theorem 1]{VM1980}. This finishes the proof.
\end{proof}

The main result of the section is the following.

\begin{thm}\label{mainfull}
Assume that $p>2$.  Then
\begin{itemize}
 \item[(i)] If $q\neq 3$, the full automorphism group $\aut_{\fqc}(\F)$ has order $2(q^2-1)$. More precisely, $\aut_{\fq}(\F) \cong \Z_{q^2-1}$ is a normal cyclic subgroup of $\aut_{\fqc}(\F)$, and 
$$
\frac{\aut_{\fqc}(\F)}{\aut_{\fq}(\F)} \cong \Z_2.
$$ 
\item[(ii)] If $q=3$, the full automorphism group $\aut_{\fqc}(\F)$ has order $6(q^2-1)=48$. More precisely, $\F^{\prime}$ is a hyperelliptic function field with hyperelliptic involution $\iota=\rho^{q+1}$, and 
$$
\frac{\aut_{\fqc}(\F)}{\langle \iota \rangle} \cong  \PGL(2,3).
$$
\end{itemize}
\end{thm}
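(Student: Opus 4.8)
\textbf{Part (i).} The plan is to prove $\bar G=N$; once this is done, Proposition~\ref{normal} gives that $\aut_{\fq}(\F)=G$ is normal in $N=\bar G$ with $\bar G/G\cong\Z_2$. The first step is to identify the stabilizers of the short orbits of $\bar G$. Since $\bar G$ is tame (Proposition~\ref{tame}), Theorem~\ref{stab} shows that $\bar G_{Q_\beta}$ is cyclic; being cyclic and containing $G$ it normalizes $G$, hence $\bar G_{Q_\beta}\le N_{\bar G}(G)=N$, and since $\mu$ interchanges $Q_\beta$ and $Q_\gamma$ we get $\bar G_{Q_\beta}=G$. Together with Proposition~\ref{vxing} ($|\bar G_{P_\infty}|=2(q-1)$) and Corollary~\ref{orbdif}, the orbit of $P_\infty$ and the orbit of $Q_\beta$ are two \emph{distinct} short orbits of $\bar G$, with stabilizers of orders $2(q-1)$ and $q^2-1$. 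Applying the Riemann--Hurwitz formula to $\F^{\prime}/(\F^{\prime})^{\bar G}$ (with equality, $\bar G$ being tame) — writing $g'$ for the genus of $(\F^{\prime})^{\bar G}$, $n_1=2(q-1)$, $n_2=q^2-1$ and $n_3,\dots$ for the stabilizer orders of the short orbits, and using $2g-2=q^2-q-4$ — a short manipulation yields
\[
2g'+\sum_{i\ge 3}\Bigl(1-\tfrac{1}{n_i}\Bigr)=\frac{q^2-q-4}{|\bar G|}+\frac{q+3}{2(q^2-1)}\ \le\ \frac12 ,
\]
the inequality because $|\bar G|\ge|N|=2(q^2-1)$. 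As every summand on the left is nonnegative and $g'$ is an integer, this forces $g'=0$, exactly one further short orbit whose stabilizer has order $2$, and equality throughout, whence $|\bar G|=2(q^2-1)=|N|$ and $\bar G=N$.

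\textbf{Part (ii), set-up.} Now $q=3$, so $g=2$ and $\F^{\prime}$ is hyperelliptic. Because $H=\langle\rho^{q+1}\rangle$ has order $2$ and $(\F^{\prime})^{H}=\fqc(v)$ is rational, $\iota:=\rho^{q+1}$ is an involution whose fixed field is rational; hence it is the unique hyperelliptic involution of $\F^{\prime}$, and in particular $\iota$ is central in $\bar G$. Therefore $\langle\iota\rangle\trianglelefteq\bar G$ and $K:=\bar G/\langle\iota\rangle$ acts faithfully on $(\F^{\prime})^{\langle\iota\rangle}=\fqc(v)$, giving $K\hookrightarrow\PGL(2,\fqc)$; by the standard description of automorphisms of hyperelliptic fields, the image of $K$ is precisely the group of Möbius transformations preserving the branch locus $B$ of $\F^{\prime}/\fqc(v)$. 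Clearing denominators in $y^2=-(v^2+av+b)/(v^3-v)$ gives $\F^{\prime}=\fqc(v,w)$ with $w^2$ a squarefree quintic; over $\fqc$ the three irreducible quadratic moduli yield isomorphic models, so we may take $w^2=v^5-v$, whence $B=\{0,\pm1,\pm\theta,\infty\}$ with $\theta^2=-1$. A direct computation shows $N/\langle\iota\rangle\cong D_4$, generated by the order-$4$ image $\bar\rho$ and the order-$2$ image $\bar\mu$ with $\bar\mu\,\bar\rho\,\bar\mu=\bar\rho^{-1}$.

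\textbf{Part (ii), conclusion.} The $6$ points of $B$ form an octahedral configuration (the order-$4$ symmetry $\bar\rho$ has $\{\pm\theta\}$ as its pair of fixed points and permutes $\{0,\pm1,\infty\}$ cyclically), so besides $\bar\rho,\bar\mu$ there is a Möbius transformation of order $3$ preserving $B$, for instance $v\mapsto(\theta v+1)/(1-\theta v)$, which induces the permutation $(0\ 1\ \theta)(\infty\ {-1}\ {-\theta})$. Being a symmetry of $B$ it lifts to an automorphism of $\F^{\prime}$, so $K$ contains $D_4$ and an element of order $3$. By Dickson's classification, a finite subgroup of $\PGL(2,\fqc)$ in characteristic $3$ is cyclic, dihedral, isomorphic to $A_4$, $S_4$ or $A_5$, isomorphic to $\PSL(2,3^m)$ or $\PGL(2,3^m)$, or an elementary abelian $3$-group extended by a cyclic group. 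Containing $D_4$ rules out the cyclic, $A_4$, $A_5$ and last cases; for $m\ge 2$ the groups $\PSL(2,3^m),\PGL(2,3^m)$ are excluded because every orbit of such a group on $\P^1(\fqc)$ has size $\ge 3^m+1\ge 10>6=|B|$, so it cannot preserve $B$; and $K$ cannot be dihedral, since $D_4\le K$ together with $3\mid|K|$ would force a cyclic subgroup $\Z_n\le K$ with $12\mid n$, whose preimage in $\bar G$ is abelian of order $2n\ge24$, contradicting Theorem~\ref{orderab} (for $g=2$, $p>2$, an abelian automorphism group has order $\le 4g+4=12$). Hence $K\cong S_4\cong\PGL(2,3)$, so $|\bar G|=2|K|=48=6(q^2-1)$, as claimed.

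\textbf{Main obstacle.} Part (i) is bookkeeping once the two stabilizers are pinned down. The substantive work is in part (ii): recognising $\rho^{q+1}$ as the hyperelliptic involution and $K$ as the full symmetry group of $B$, actually \emph{producing} the extra order-$3$ automorphism so that $K$ is strictly larger than $N/\langle\iota\rangle\cong D_4$, and making the group-theoretic elimination airtight — in particular excluding the dihedral and $\PSL(2,9)$-type subgroups that also contain $D_4$, for which Theorem~\ref{orderab} and the faithful action on the $6$-point set $B$ are the crucial inputs.
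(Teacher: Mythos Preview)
Your proof is correct in both parts but follows a genuinely different route from the paper in each.

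For part (i), the paper argues indirectly: given $\delta\in\bar G\setminus G$, it studies $K:=G\cap\delta G\delta^{-1}$. If $K$ were trivial then $|\bar G|>(q^2-1)^2$, which together with tameness and the Hurwitz-type bound $|\bar G|\le 84(g-1)$ forces $q\le 5$; the case $q=5$ is then disposed of by an ad hoc Sylow analysis (ruling out $|\bar G|\in\{624,672\}$ via subgroups of orders $7$ and $13$). Once $K$ is non-trivial, cyclicity of $G$ gives $\delta K\delta^{-1}=K$, so $\delta$ permutes the short orbits of $K$; combined with $\bar G_{Q_\beta}=G$ (deduced from Theorems~\ref{stab} and~\ref{orderab}) this yields $\delta\rho\delta^{-1}\in G$, whence $\delta\in N$. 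Your Riemann--Hurwitz count is more direct: after pinning down $\bar G_{Q_\beta}=G$ via the normalizer (rather than the abelian bound) and using Proposition~\ref{vxing}, a single genus computation determines $|\bar G|$ uniformly in $q$, with no special treatment of $q=5$.

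For part (ii), the paper exhibits an explicit order-$3$ automorphism $\varepsilon$ in the coordinates $(v,y)$, invokes a sharp bound (\cite[Theorem~11.127]{HKT}) to conclude $|\bar G|=48$, and then uses \cite[Theorem~3]{VM1980} together with Theorem~\ref{orderab} to show that the quotient by the hyperelliptic involution is $\PGL(2,3)$ rather than a semidirect product with an abelian group of order $8$. Your approach via the model $w^2=v^5-v$, the explicit order-$3$ M\"obius map on the six-point branch locus, and Dickson's classification is equivalent in spirit; the orbit-size argument excluding $\PSL(2,3^m)$ and $\PGL(2,3^m)$ for $m\ge2$, and the abelian bound excluding large dihedral groups, serve the same purpose as the paper's appeal to \cite[Theorem~11.127]{HKT}. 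Both routes reach $\PGL(2,3)$ with comparable effort.
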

Moreover, in both cases  $\aut_{\fqc}(\F)=\aut_{\fqs}(\F)$.
\begin{proof}
First assume that $q \neq 3$. We aim to show that $G$ is a normal subgroup of $\bar{G}$. The result then will follow immediately from Proposition \ref{normal}. Let $\delta \in \bar{G}\backslash G$. We claim that there exists a non-trivial $K \leq G$ such that $\delta K\delta^{-1}=K$. Indeed, assume that $\delta G\delta^{-1} \cap G$ is trivial. Then arguing as in the proof of Lemma \ref{pgrupo}, we conclude that $\bar{G}$ has a subgroup of order $> (q^2-1)^2$, as $\delta \notin (\delta G\delta^{-1}) \cdot G$. However, $\bar{G}$ is tame by Proposition \ref{tame}, and then by Theorem \ref{orbits} we have that $|\bar{G}| \leq 84(g-1)$. Thus we obtain $q \leq 5$. We now proceed to exclude case $q=5$.

If $q=5$, we have $|\bar{G}|>(q^2-1)^2=576$ and $|N|=2(q^2-1)=48$ divides $|\bar{G}|$. Hence for some $k>0$, we have $576<16k \leq 672$, which gives $k=13$ or $k=14$. Thus we conclude that $|\bar{G}|=624$ or $|\bar{G}|=672=84(g-1)$; the former case implies that $\bar{G}$ has a subgroup of order $13$, which is forbidden by \cite[Theorem 1]{Homma1980}. If $|\bar{G}|=672$,  from Sylow Theory we obtain that the normalizer of a $7$-Sylow subgroup $S_7$ in $\bar{G}$ has an automorphism of order $3$. Moreover, by the Riemann-Hurwitz Formula, the fixed field $(\F^{\prime})^{S_7}$ has genus $0$, and $S_7$ fixes $5$ places of $\F^{\prime}$. Hence, there is a subgroup $J \leq \bar{G}$ of order $3$ acting on the set of fixed places of $S_7$. Let $K$ be the subgroup of order $3$ of $G$. Since $K$ is a $3$-Sylow subgroup, there exists $\sigma \in \bar{G}$ such that $K=\sigma J \sigma^{-1}$. Note that $J$ must fix a place $P$ that is fixed by $S_7$. Thus $K$ fixes $\sigma(P)$, which gives that $\sigma(P) \in \{Q_\beta, Q_\gamma\}$. Furthermore, $\sigma S_7 \sigma^{-1}$ fixes $\sigma(P)$ as well. In particular, the stabilizer $(\bar{G})_{\sigma(P)}$ of $\sigma(P)$ contains $G$ (which has order $24$) and an automorphism of order $7$. Hence $|(\bar{G})_{\sigma(P)}|>168>40=4g+4$, which is a contradiction since $(\bar{G})_{\sigma(P)}$ is cyclic (Theorem \ref{orderab} and Theorem \ref{stab}).

Therefore we conclude $K:=\delta G\delta^{-1} \cap G$ is non-trivial. Since $G$ is cyclic, we have that $\delta K\delta^{-1}=K$. In particular, $\delta$ acts on the set of short orbits of $K$.

Now, note that, since a tame stabilizer of a place is abelian by Theorem \ref{stab}, from Proposition \ref{tame} we obtain that $\bar{G}_{Q_\beta}=\bar{G}_{Q_\gamma}=G$. Furthermore, the short orbits of $K$ are contained in $\{P_{\infty},P_0,\ldots,P_{q-1},Q_{\beta},Q_{\gamma}\}$. Since $G$ fixes elementwise both $Q_{\beta}$ and $Q_{\gamma}$, and there is no automorphism of $\F^{\prime}$ sending $Q_\beta$ and $Q_\gamma$ in one of the $P_j$, we have that 
$$
\delta:\{Q_\beta,Q_\gamma\} \rightarrow \{Q_\beta,Q_\gamma\}.
$$
Since $\bar{G}_{Q_\beta}=\bar{G}_{Q_\gamma}=G$ and $\delta \notin G$, we obtain that $\delta(Q_{\beta})=Q_{\gamma}$ and $\delta^2 \in G$. Finally, 
$$
(\delta \rho \delta^{-1})(Q_{\beta})=(\delta \rho)(Q_{\gamma})=\delta(Q_{\gamma})=Q_\beta,
$$ 
and so $\delta \rho \delta^{-1} \in \bar{G}_{Q_\beta}=G$, that is, $\delta \in N$. Therefore $N=\bar{G}$.

Assume now that $q=3$. In this case, one can check that, up to $\mathbb{F}_3$-isomorphism, $\F=\mathbb{F}_3(v,y)$ with 
$$
y^2=\frac{v^2+1}{v^3-v}.
$$
Let $i \in \mathbb{F}_9$ such that $i^2=-1$. Then one can check that 
$$
\varepsilon:(v,y) \mapsto \left(-\frac{v+i}{v-i},\frac{i(1-i)(v^2-1)y}{v+i} \right)
$$
is an automorphism of order $3$ of $\F^{\prime}$ defined over $\mathbb{F}_9$.
Hence $\aut_{\fqs}(\F)$ has order at least $48$. From \cite[Theorem 11.127]{HKT}, the function field $\F^{\prime}$ cannot have more automorphisms, which means that $|\bar{G}|=48$ . The subgroup $H$ is generated by the central hyperelliptic involution $\iota:=\rho^4$. Thus $\bar{G}/H$ is isomorphic to a subgroup of $\aut(\fqc(v))$ of order $24$ defined over $\fqs$, that is, $\bar{G}/H$ is isomorphic to a subgroup of order $24$ of $\PGL(2,9)$. Hence, from \cite[Theorem 3]{VM1980}, we have that either $\bar{G}/H \cong \PGL(2,3)$ or $\bar{G}/H$ is isomorphic to a semidirect product of a group of order $3$ with an abelian group of order $8$.  If the later case holds, then since there is a copy of $G/H$ in $\aut(\fqc(v))$ fixing the places lying under $Q_\beta$ and $Q_\gamma$, we have from \cite[Theorem 1]{VM1980} that an automorphism of order $8$ must fix  these places.  In particular, there exists an automorphism subgroup of order $16$ fixing  $Q_\beta$. From Theorem \ref{stab} this group is abelian, which is a contradiction to Theorem \ref{orderab}. Therefore, we must have $\bar{G}/H \cong \PGL(2,3)$.
\end{proof}

\begin{rem}
Notice that we conclude from Theorem \ref{mainfull} and the discussion in the beginning of this section that the full automorphism group of $F(\Lambda_M)$ when $M$ is monic, quadratic and irreducible is non-tame if, and only if, $p=2$ or $q=3$. We also highlight that, although Theorem \ref{mainfull} do not comprise the case $p=2$, we have that Proposition \ref{normal}, Proposition \ref{vxing} and Corollary \ref{orbdif} hold for $p=2$. 
\end{rem}

\section*{Acknowledgements}

The research of Nazar Arakelian was partially supported by grant 2023/03547-2, São Paulo Research Foundation (FAPESP). The research of Luciane Quoos  was partially supported by CNPq (Bolsa de Produtividade 302727/2019-1, FAPERJ/SEI-260003/002364/2021).

\bibliographystyle{abbrv}


\bibliography{bibcyclo}

\end{document}